\newcommand{\C}{\Gamma}
\newcommand{\M}{{\cal M}}
\newcommand{\e}{\varepsilon}
\newcommand{\f}{\varphi}
\newcommand{\eR}{\mathbb{R}}
\newcommand{\eN}{\mathbb{N}}
\newcommand{\Ze}{\mathbb{Z}}
\begin{document}

\begin{frontmatter}

\title{Wavy spirals and their fractal connection with chirps}

\author[fer]{L.\ Korkut}
\ead{luka.korkut@fer.hr}

\author[fer]{D.\ Vlah\corref{corresp}}
\ead{domagoj.vlah@fer.hr}

\author[fer]{D.\ \v Zubrini\'c}
\ead{darko.zubrinic@fer.hr}

\author[fer]{V.\ \v Zupanovi\'c}
\ead{vesna.zupanovic@fer.hr}

\cortext[corresp]{Corresponding author. Tel. +385 01 612 99 03, Fax +385 01 612 99 46}

\address[fer]{University of Zagreb, Faculty of Electrical Engineering and Computing, Unska 3, 10000 Zagreb, Croatia}

\journal{arXiv.org}

\begin{abstract}
We study the fractal oscillatority of a class of real $C^1$ functions $x=x(t)$ near $t=\infty$. It is measured  by \emph{oscillatory} and \emph{phase dimensions}, defined as box dimensions of the graph of $X(\tau)=x(\frac{1}{\tau})$ near $\tau=0$ and the trajectory $(x,\dot{x})$ in $\mathbb{R}^2$, respectively, assuming that $(x,\dot{x})$ is a  spiral converging to the origin. The relationship between these two dimensions has been established for a class of oscillatory functions using formulas for box dimensions of graphs of chirps and nonrectifiable wavy spirals, introduced in this paper. \emph{Wavy spirals} are a specific type of spirals, given in polar coordinates by $r=f(\f)$, converging to the origin in \emph{non-monotone} way as a function of $\f$. They emerged in our study of phase portraits associated to solutions of Bessel equations. Also, the rectifiable chirps and spirals have been studied.
\end{abstract}

\begin{keyword}
Wavy spiral \sep chirp \sep box dimension \sep Minkowski content \sep oscillatory dimension \sep phase dimension

\MSC 37C45 \sep 34C15 \sep 28A80
\end{keyword}

\end{frontmatter}

\newtheorem{theorem}{Theorem}
\newtheorem{proposition}{Proposition}
\newtheorem{lemma}{Lemma}
\newdefinition{definition}{Definition}
\newdefinition{remark}{Remark}
\newproof{proof}{Proof}

\section{Introduction}
Fractal analysis of differential equations has since the last decades emerged as an important tool in better understanding the behavior of their oscillatory solutions. The main focus of fractal analysis in dynamics is in fractal dimension theory. Its goal is in determining complexity of invariant sets and measures using fractal dimensions. The fractal dimension has been successfully used in studying, for instance, logistic map, Smale horseshoe, Lorenz attractor, H\'{e}non Attractor, Julia and Mandelbrot sets, spiral trajectories, infinite-dimensional dynamical systems and even in probability theory, see \cite{fdd}.

In this work we are focused on studying the connection between the fractal dimension of graphs of oscillatory solutions, and the fractal dimension of the associated phase portraits. In particular, we use the box dimension, which we exploit instead of Hausdorff dimension. Due to the countable stability of Hausdorff dimension, its value is trivial on all smooth nonrectifiable curves, while the box dimension is nontrivial, that is, larger than $1$. From the view of fractal analysis of trajectories and graphs of solutions of differential equations, the most interesting are solutions having phase plots and graphs of infinite length. The Hausdorff dimension is not suitable to classify these solutions, while the box dimension comes in handy.

Our work was initially inspired by Tricot \cite{tricot}, where box dimensions of graphs of a simple spiral ($r=\f^{-\alpha}$, $\alpha\in(0,1)$, in polar coordinates) and $(\alpha,\beta)$-chirp ($f(t)=t^{\alpha}\cos{t^{-\beta}}$, $0<\alpha<\beta$) have been computed near the origin. Since then, these results have been generalized to some more general spiral trajectories of dynamical systems, and to chirp-like functions. Fractal properties of spiral trajectories of dynamical systems in the phase plane have been studied by \v Zubrini\'c and \v Zupanovi\'c, see e.g.\ \cite{zuzu, zuzulien, cras}. An interesting behavior of the box dimension of spiral trajectories has been found and related to the bifurcation of a system, in particular to the Hopf bifurcation. On the other hand, the chirp-like behavior of solutions of different types of second-order linear differential equations is also of interest. It has been studied by Kwong, Pa\v si\'c, Tanaka and Wong: Euler type equations are considered in \cite{pasiceuler, pasic, wong1}, Hartman-Wintner type equations in \cite{kpw}, half linear equations in \cite{pasicwo}, and for the Bessel equation see \cite{mersat}. More specifically, this work has been motivated by Pa\v si\'c, \v Zubrini\'c and \v Zupanovi\'c \cite{chirp}, containing the first results connecting fractal properties of chirps and spirals, with applications to Li\' enard and Bessel equations.

All this encouraged us to study and analyze the connection between chirp-like functions and the corresponding spiral trajectories in the phase plane and vice versa. There are two possible ways of looking at the solutions: using the graph of a solution, or using the phase plot of the solution, and the latter was first theoretically developed by Poincar\'{e}. Our main results are obtained in Theorems \ref{s-c-p} and \ref{c-s}. For some applications to differential equations and dynamical systems, see \cite{bessel,luvedo}.

A specific type of spirals associated to oscillatory solutions of Bessel equations, emerged in our study of phase portraits, converging to the origin in \emph{non-monotone} way as a function of $\f$. We call them \emph{wavy spirals}, see Definition \ref{def_wavy_spiral}. They also appear in the study of curves given by the parametrization of oscillatory integrals from Arnol$'$d, Guse{\u\i}n-Zade and Varchenko, \cite[Part II]{arnold-vol2}. These curves can exhibit even more complex behavior, having self-intersections. The oscillatory integrals from \cite{arnold-vol2} are naturally related to generalized Fresnel integrals, and the fractal properties of associated spirals studied in \cite{q_clothoid}.

Techniques of fractal analysis have also been successfully applied in the study of bifurcations: see e.g.\ Horvat Dmit\-ro\-vi\'c \cite{laho}, Li and Wu \cite{li}, Marde\v si\' c, Resman and \v Zupanovi\'c \cite{MRZ}, Resman \cite{majaformal}, as well as in the case of Hopf bifurcation at infinity considered in Radunovi\'c, \v Zubrini\'c and \v Zupanovi\'c \cite{RaZuZu}, and for infinite-dimensional dynamical systems related to Schr\"{o}dinger equation, see Mili\v si\'c, \v Zubrini\'c and \v Zupanovi\'c \cite{mzz-Schrodinger}.

\section{Summary of results and definitions}

Our results can be summarized in the following way. An $(\alpha,1)$-chirp-like function near the origin, described in Theorem \ref{s-c-p}, is connected with a spiral ``similar'' to $r=\f^{-\alpha}$, defined in polar coordinates. If $\alpha\in(0,1)$ then the box dimension of this spiral is equal to $\frac{2}{\alpha+1}$, see Theorems \ref{novizuzu} and \ref{s-c-p}(i), and for $\alpha>1$ this spiral is rectifiable, see Theorem \ref{s-c-p}(ii). Furthermore, we consider the opposite direction and generate a chirp from a given  planar spiral. The obtained chirp is $(\alpha,1)$-chirp-like function, $\alpha\in(0,1)$, see Definition \ref{def_chirp_like_function}, and the box dimension of its graph is equal to $\frac{3-\alpha}{2}$, see Theorems~\ref{BDchirp} and \ref{c-s}. If the planar spiral is rectifiable and $\alpha>1$, then the corresponding chirp-like function has rectifiable graph as well, see Theorem \ref{eqcs1}.

\smallskip

Let us introduce some definitions and notation.
Given a bounded subset $A$ of $\eR^N$, we define the  $\e$-neighborhood of $A$ by
$
A_\e:=\{y\in\eR^N\:d(y,A)<\e\}
$,
where $d(y,A)$ denotes the Euclidean distance from $y$ to $A$.
By {\it lower $s$-dimensional  Minkowski content} of $A$, $s\ge0$ we mean
$$
\M_*^s(A):=\liminf_{\e\to0}\frac{|A_\e|}{\e^{N-s}} ,
$$
and analogously for the {\it upper $s$-dimensional Minkowski content} $\M^{*s}(A)$. If both these quantities coincide,
the common value is called the {\it s-dimensional} {\it Minkowski content of A},
and is denoted by $\M^s(A)$.
Now we can introduce the {\it lower and upper box dimensions} of $A$ by
$$
\underline\dim_BA:=\inf\{s\ge0\:\M_*^s(A)=0\} ,
$$
 and analogously
$\overline{\dim}_BA:=\inf\{s\ge0\:\M^{*s}(A)=0\}$.
If these two values coincide, we call it simply the box dimension of $A$, and denote it by $\dim_BA$.

If $0<\M_*^d(A)\le\M^{*d}(A)<\infty$ for some $d$, then we say
 that $A$ is {\it Minkowski nondegenerate}. In this case obviously $d=\dim_BA$.
In the case when lower or upper $d$-dimensional Minkowski contents of $A$ are $0$ or $\infty$,
where $d=\dim_BA$, or $\underline{\dim}_BA<\overline{\dim}_BA$,
we say that $A$ is {\it degenerate}.

More details on these definitions can be seen in  Falconer \cite{falc} and Tricot \cite{tricot}. Some generalizations can be seen in \cite{MRZ}.

\begin{definition} Let $x:[t_0,\infty)\to\eR$, $t_0>0$, be a continuous function. We say that $x$ is {\it oscillatory function} near $t=\infty$ if there exists a sequence $t_k\to\infty$ such that $x(t_k)=0$, and the functions $x|_{(t_k,t_{k+1})}$ alternately change sign for $k\in \eN$.
\end{definition}

Analogously, let  $u:(0,t_0]\to\eR$, $t_0>0$, be a continuous function. We say that $u$ is {\it oscillatory function} near the origin if there exists a sequence $s_k$ such
that $s_k\searrow 0$ as $k\rightarrow \infty $, $u(s_k)=0$ and restrictions $u|_{(s_{k+1},s_k)}$ alternately change sign for $k\in \eN$.

\begin{definition} (see Pa\v si\' c \cite{pasic}) Suppose that $v:I\to {\eR},\ I=(0,1],$ is an oscillatory function near the origin, $d\in[1,2)$. We say that $v$ is {\it{d-dimensional fractal oscillatory}} near the origin if $\dim_BG(v)=d$ and $0<\M_*^{d}(G(v))\le\M^{*d}(G(v)) <\infty$, where $G(v)$ denotes the graph of~$v$.
\end{definition}

\begin{definition} Assume that $x:[t_0,\infty)\to\eR$ is oscillatory near $t=\infty$. Let us define $X:(0,1/t_0]\to\eR$ by $X(\tau)=x(1/\tau)$. It is clear that $X(\tau)$ is oscillatory near the origin. We measure the rate of oscillatority of $x(t)$ near $t=\infty$ by the rate of oscillatority of $X(\tau)$ near $\tau=0$. More precisely, the {\it oscillatory dimension} $\dim_{osc}(x)$ (near $t=\infty$) is defined as box dimension
of the graph of $X(\tau)$ near $\tau=0$:
$$
\dim_{osc}(x)=\dim_B G(X),
$$
provided the box dimension exists.
\end{definition}

\begin{definition}Assume now that $x$ is of class $C^1$. We say that $x$ is a {\it phase oscillatory} function if the following condition holds: the set $\C=\{(x(t),\dot x(t)):t\in[t_0,\infty)\}$ in the plane is a spiral converging to the origin.
\end{definition}

\begin{definition}By a {\it spiral} here we mean the graph of a function $r=f(\varphi)$, $\varphi\geq\varphi_1>0$, in polar coordinates, where
\begin{equation}\label{def_spiral}
\left\{\begin{array}{l}
f:[\varphi_1,\infty)\rightarrow(0,\infty) \textrm{ is such that } f(\varphi)\to 0 \textrm{ as } \varphi\to\infty,\\
f \textrm{ is \emph{radially decreasing} (i.e., for any fixed } \varphi\geq\varphi_1\\
\textrm{the function } \mathbb{N}\ni k\mapsto f(\varphi+2k\pi) \textrm{ is decreasing)} .
\end{array}\right.
\end{equation}
\end{definition}

This definition appears in \cite{zuzu}. Depending on the context, by a spiral here we also mean the graph of a function $r=g(\f)$, $\f\leq\f'_1<0$, in polar coordinates,
such that the curve defined as the graph of $r=g(-\f)$, $\f\geq|\f'_1|>0$, given in polar coordinates, satisfies (\ref{def_spiral}). It is easy to see that a spiral defined
by a function $g(\f)$ is a mirror image of the spiral defined by $g(-\f)$, with respect to the $x$-axis. We also say that a graph of a function $r=f(\varphi)$, $\varphi\geq\varphi_1>0$,
defined in polar coordinates, is a {\it spiral near the origin} if there exists $\f_2\geq\f_1$ such that the graph of the function $r=f(\varphi)$, $\varphi\geq\varphi_2$, viewed  in polar coordinates, is a spiral.

\begin{definition}The {\it phase dimension} $\dim_{ph}(x)$ of a function $x:[t_0,\infty)\to\eR$ of class $C^1$ is defined as the box dimension of the corresponding planar curve $\C=\{(x(t),\dot x(t)):t\in[t_0,\infty)\}$.
\end{definition}

Oscillatory and phase dimensions are fractal dimensions, introduced in the study of  chirp-like solutions of second order ODEs, see \cite{chirp}. More about fractal dimensions in dynamics can be found in \cite{fdd}.

\smallskip

For two real functions $f(t)$ and $g(t)$ of a real variable
we write $f(t)\simeq g(t)$ as $t\to0$ (as $t\to\infty$)
if there exist two positive constants $C$ and $D$ such that $C\,f(t)\le g(t)\le D\,f(t)$ for all $t$ sufficiently close to $t=0$ (for all $t$ large enough).
For a function $F:U\to V$, with $U,V\subset\eR^2$, $V=F(U)$, we write $|F(x_1)-F(x_2)|\simeq
|x_1-x_2|$ if $F$ is a bi-Lipschitz mapping, i.e.,\ both $F$ and $F^{-1}$ are Lipschitzian.

\begin{definition}We write $f(t)\sim g(t)$ if $f(t)/g(t)\to1$ as $t\to0$ (as $t\to\infty$). Also, if $k$  is fixed positive integer, for two functions $f$ and $g$ of class $C^k$ we write,
$$
f(t)\sim_k g(t) \ \mbox{as}\ t\to0\ (\mbox{as}\ t\to\infty) ,
$$
if $f^{(j)}(t)\sim g^{(j)}(t)$ as $t\to 0$ (as $t\to\infty$) for all $j=0,1,...,k$.
\end{definition}

For example, $\frac{(t-1)^{4-\alpha}}{t^4}\sim_3t^{-\alpha}$ as $t\to\infty$, for $\alpha\in(0,1)$.

\smallskip

Analogously,   if $k$  is fixed positive integer, for two functions $f$ and $g$ of class $C^k$ we write
$$
f(t)\simeq_kg(t)\ \mbox{as}\ t\to 0\ (\mbox{as}\ t\to\infty),
$$
if $f^{(j)}(t)\simeq g^{(j)}(t)$ as $t\to 0$ (as $t\to\infty$) for all $j=0,1,...,k$.

\smallskip

We write $f(t)=O(g(t))$ as $t\to0$ (as $t\to\infty$) if there exists a positive constant $C$ such that $|f(t)|\leq C|g(t)|$ for all $t$ sufficiently close to $t=0$. (for all $t$ large enough). Similary, we write $f(t)=o(g(t))$ as $t\to\infty$ if for every positive constant $\varepsilon$ it holds $|f(t)|\leq \varepsilon|g(t)|$ for all $t$ sufficiently large.

\begin{definition}\label{def_chirp_like_function} Functions of the form
$$y=P(x)\sin(Q(x))\ \mathrm{or}\ y=P(x)\cos(Q(x)) ,$$
where $P(x)\simeq x^\alpha$, $Q(x)\simeq_1 x^{-\beta}$ as $x\to 0$, are called $(\alpha,\beta)$-{\it chirp-like function} near $x=0$.
\end{definition}

\section{Spirals generated by chirps}

We study spirals generated by chirps in the sense of Theorem \ref{s-c-p}. To prove Theorem \ref{s-c-p} about box dimension of a spiral generated by a chirp we need a new version of \cite[Theorem~5]{zuzu}. Let us first recall \cite[Theorem 5]{zuzu}, cited here in a more condensed form, suitable for our purposes. The following theorem extends a result about box dimension of spiral from Dupain, Mend\`es France and Tricot, see \cite{tric, tricot}.

\begin{theorem}[Theorem 5 from \cite{zuzu}]{\label{ffa}}
Let $f\:[\f_1,\infty)\to(0,\infty)$ be a decreasing function of class $C^2$, such that $f(\varphi)\to 0$ as $\varphi\to\infty$. Let $\alpha\in(0,1)$. Assume that there exist positive constants $\underline{m}$, $\overline{m}$, $M_1$, $M_2$ and $M_3$ such that for all $\f\ge\f_1>0$,
$$
\underline{m}\,\f^{-\alpha}\le f(\f)\le\overline{m}\,\f^{-\alpha} ,
$$
$$
M_1\f^{-\alpha-1}\le|f'(\f)|\le M_2\f^{-\alpha-1},\quad |f''(\f)|\le M_3\f^{-\alpha}.
$$
Let $\C$ be the graph of $r=f(\f)$ in polar coordinates.
Then
$$
\dim_B\C=\frac{2}{1+\alpha} .
$$
\end{theorem}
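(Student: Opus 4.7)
The plan is to estimate directly the Lebesgue measure of the $\e$-tube $\C_\e$ and then read off the box dimension from the standard identity $\dim_B\C = 2 - \lim_{\e\to 0^+}\frac{\log|\C_\e|}{\log\e}$ (replacing $\lim$ by $\liminf$ or $\limsup$ in general). First I would identify a \emph{critical angle} $\f(\e)$ at which neighbouring windings of the spiral first cease to be distinguishable on scale $\e$. By the mean value theorem and the hypothesis $|f'(\f)|\simeq \f^{-\alpha-1}$, the radial gap between the windings at angles $\f$ and $\f+2\pi$ satisfies $f(\f)-f(\f+2\pi)\simeq \f^{-\alpha-1}$, so setting this gap equal to $2\e$ gives $\f(\e)\simeq \e^{-1/(1+\alpha)}$ and the corresponding radius $f(\f(\e))\simeq \e^{\alpha/(1+\alpha)}$.

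Next I would split $\C_\e$ into an inner part (contribution from $\f\ge\f(\e)$) and an outer part (contribution from $\f\in[\f_1,\f(\e)]$). On the inner part the radial gaps are smaller than $2\e$ and, using that $f$ is decreasing together with the lower bound $f(\f)\ge \underline{m}\,\f^{-\alpha}$, the $\e$-tube fills a disk of radius $\simeq \e^{\alpha/(1+\alpha)}$ around the origin, contributing area $\simeq \e^{2\alpha/(1+\alpha)}$. On the outer part the windings are well separated on scale $\e$, so the sausage looks locally like a Euclidean tube of half-width $\e$ around a smooth arc and a tube-length bound gives outer area $\simeq 2\e\cdot L_\e$, with $L_\e$ the arclength of the outer piece. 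Using $ds=\sqrt{f'(\f)^2+f(\f)^2}\,d\f\simeq \f^{-\alpha}\,d\f$ (since $f$ dominates $f'$ for large $\f$), one obtains $L_\e\simeq \int_{\f_1}^{\f(\e)}\f^{-\alpha}\,d\f\simeq \f(\e)^{1-\alpha}\simeq \e^{-(1-\alpha)/(1+\alpha)}$, so the outer contribution is again $\simeq \e^{2\alpha/(1+\alpha)}$. Summing the two estimates gives $|\C_\e|\simeq \e^{2\alpha/(1+\alpha)}$ and hence $\dim_B\C=2-\frac{2\alpha}{1+\alpha}=\frac{2}{1+\alpha}$.

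The main obstacle in making this rigorous is the justification that the outer $\e$-tube has no parasitic self-overlap: neighbouring windings and locally curved pieces of the same winding must stay more than $2\e$ apart so that the heuristic $|\mathrm{tube}|\simeq 2\e\cdot\mathrm{length}$ applies with matching upper and lower constants. This is precisely the role of the $C^2$ hypothesis together with $|f''(\f)|\le M_3\f^{-\alpha}$: it bounds the curvature of $\C$ and, via a covering by polar rectangles whose angular width is tuned to the local interwinding gap $\simeq \f^{-\alpha-1}$, controls the change of tangent direction along any scale-$\e$ arc. The filling of the inner disk is more elementary, but one must rule out small holes; this follows from noting that on each ray emanating from the origin the successive crossings of $\C$ are within distance $2\e$ whenever $\f\ge\f(\e)$, combined with the bi-Lipschitz equivalence of polar and Cartesian coordinates away from the origin. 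Both estimates matching up to multiplicative constants is what yields equality, and not just inequality, of upper and lower box dimension.
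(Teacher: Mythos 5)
Your nucleus--tail decomposition of the Minkowski sausage, with the critical angle $\f(\e)\simeq\e^{-1/(1+\alpha)}$ separating a filled inner disk of radius $\simeq\e^{\alpha/(1+\alpha)}$ from a $2\e$-thickened outer arc of length $\simeq\e^{-(1-\alpha)/(1+\alpha)}$, is exactly the argument behind this result: the paper does not reprove Theorem~\ref{ffa} but imports it from \cite{zuzu}, whose proof (in the tradition of Dupain--Mend\`es France--Tricot) proceeds by the same two-part estimate and the same matching bounds $|\C_\e|\simeq\e^{2\alpha/(1+\alpha)}$. The only minor discrepancy is that you invoke the $C^2$ bound $|f''(\f)|\le M_3\f^{-\alpha}$ to control the outer tube, whereas the paper's remark following Theorem~\ref{novizuzu} notes that this hypothesis (and the constant $M_3$) is needed in \cite{zuzu} only for the Minkowski-content computation; for the box dimension alone the interwinding separation $f(\f)-f(\f+2\pi)\gtrsim\f^{-\alpha-1}$, which you also derive from the lower bound on $|f'|$, already suffices.
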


Now we provide a new version of Theorem \ref{ffa}.

\begin{theorem}[Dimension of a piecewise smooth nonincreasing spiral]{\label{novizuzu}}
Let $f:[\f_1,\infty)\rightarrow(0,\infty)$ be a nonincreasing and radially decreasing function, also a continuous and piecewise continuously differentiable. We assume that the number of smooth pieces of $f$ in $[\f_1,\overline{\f}_1]$ is finite, for any $\overline{\f}_1>\f_1$.
Assume that there exist positive constants $\underline{m}$, $\overline{m}$, $\underline{a}$ and $M$ such that for all $\f\geq\f_1$,
$$
\underline{m}\f^{-\alpha}\leq f(\f)\leq \overline{m}\f^{-\alpha} ,
$$
$$
\underline{a}\f^{-\alpha-1}\leq f(\f)-f(\f+2\pi) ,
$$
and for all $\f$ where $f(\f)$ is differentiable,
$$
|f'(\f)|\leq M\f^{-\alpha-1} .
$$
Let $\Gamma$ be the graph of $r=f(\f)$ in polar coordinates. If $\alpha\in(0,1)$ then
$$
\dim_B\Gamma=\frac{2}{1+\alpha}.
$$
\end{theorem}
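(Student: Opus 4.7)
The plan is to adapt the two-scale decomposition behind Theorem \ref{ffa} to the piecewise smooth regime, replacing the step that used the $|f''|$ bound (which delivered a mean-value estimate for the one-turn drop $f(\varphi)-f(\varphi+2\pi)$) by the direct two-sided control of that same drop: the lower bound $\underline a\,\varphi^{-\alpha-1}$ is now an explicit hypothesis, while the upper bound $f(\varphi)-f(\varphi+2\pi)\le 2\pi M\,\varphi^{-\alpha-1}$ still follows by integrating $|f'|$ piecewise on each smooth component of $f$. Once this two-sided bound is established, everything reduces to proving $|\Gamma_\varepsilon|\simeq\varepsilon^{2\alpha/(1+\alpha)}$ as $\varepsilon\to 0^+$ and then applying the standard identity
$$\dim_B\Gamma=2-\lim_{\varepsilon\to 0^+}\frac{\log|\Gamma_\varepsilon|}{\log\varepsilon}.$$

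Concretely, I would introduce a critical angle $\varphi(\varepsilon)\simeq\varepsilon^{-1/(\alpha+1)}$ at which the one-turn drop becomes comparable to $2\varepsilon$, and split $\Gamma=\Gamma_1\cup\Gamma_2$ with $\Gamma_1=\{\varphi\in[\varphi_1,\varphi(\varepsilon)]\}$ (tail, well-separated turns) and $\Gamma_2=\{\varphi\ge\varphi(\varepsilon)\}$ (nucleus). The nucleus is contained in the disk of radius $f(\varphi(\varepsilon))\simeq\varepsilon^{\alpha/(\alpha+1)}$, so $|\Gamma_{2,\varepsilon}|\lesssim\varepsilon^{2\alpha/(\alpha+1)}$. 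On the tail, the radial separation of consecutive turns exceeds $2\varepsilon$ by the lower bound, so the $\varepsilon$-neighbourhoods of the individual turns are pairwise disjoint and $|\Gamma_{1,\varepsilon}|\lesssim 2\varepsilon\cdot\mathrm{length}(\Gamma_1)$, with the length estimated piecewise via the polar arclength integral
$$\mathrm{length}(\Gamma_1)=\int_{\varphi_1}^{\varphi(\varepsilon)}\sqrt{f(\varphi)^2+f'(\varphi)^2}\,d\varphi\lesssim\int_{\varphi_1}^{\varphi(\varepsilon)}\varphi^{-\alpha}\,d\varphi\simeq\varphi(\varepsilon)^{1-\alpha}\simeq\varepsilon^{-(1-\alpha)/(\alpha+1)},$$
valid since $\alpha\in(0,1)$. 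Combining gives the upper bound $|\Gamma_\varepsilon|\lesssim\varepsilon^{2\alpha/(\alpha+1)}$. For the matching lower bound I would observe that inside the disk of radius $\simeq f(\varphi(\varepsilon))$ every radial segment from the origin meets two consecutive turns whose radii differ by at most $2\varepsilon$ (by the upper control on the one-turn drop for $\varphi\ge\varphi(\varepsilon)$), so every point of that disk lies within $\varepsilon$ of $\Gamma$; hence $|\Gamma_\varepsilon|\gtrsim\varphi(\varepsilon)^{-2\alpha}\simeq\varepsilon^{2\alpha/(\alpha+1)}$, which produces the desired dimension $2/(1+\alpha)$.

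The main obstacle I anticipate is the bookkeeping at the breakpoints of $f$: the ``disjoint tubes'' estimate on $\Gamma_1$ and the piecewise polar arclength bound must be verified without appealing to any $C^2$ or convexity property, using only continuity, piecewise-$C^1$ smoothness with finitely many pieces on bounded subintervals, and the radial-decrease and nonincrease hypotheses. In the $C^2$ original, the $|f''|$ bound provided both the one-turn drop asymptotic and the regularity needed for standard tube-formula arguments; here those two roles must be played separately by the new explicit lower bound and by the piecewise integration of $|f'|$, and the delicate point is to choose the constant in $\varphi(\varepsilon)$ so that the critical angle extracted from $\underline a\,\varphi^{-\alpha-1}$ is simultaneously compatible with the upper bound coming from $|f'|\le M\varphi^{-\alpha-1}$ across every branch of $f$.
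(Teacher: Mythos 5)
Your proposal is correct and follows essentially the same route as the paper: the paper's proof simply declares the argument ``analogous to the proof of Theorem 5 of [zuzu]'', i.e.\ exactly the nucleus/tail decomposition at the critical angle $\varphi(\varepsilon)\simeq\varepsilon^{-1/(\alpha+1)}$ that you reconstruct, with the explicit one-turn-drop hypothesis and the piecewise integration of $|f'|$ standing in for the $C^2$ and lower-derivative bounds, just as the Remark following the proof indicates. The only structural difference is that the paper routes the piecewise-smoothness and initial-arc issues through the excision Lemma~\ref{novalemazuzu}, whereas your self-contained tail/length estimate absorbs the initial rectifiable piece directly; note also that the tube-disjointness you invoke on the tail is not actually needed for the upper bound on $|\Gamma_{1,\varepsilon}|$, which holds for any rectifiable arc.
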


For the proof of Theorems \ref{novizuzu} and \ref{s-c-p} below, we need the following Lemma~\ref{novalemazuzu} that is a generalization of \cite[Lemma 1]{zuzu} dealing with smooth spirals.

\begin{lemma}[Excision property for piecewise smooth curves]{\label{novalemazuzu}}
Let $\Gamma$ be the image of a continuous and piecewise continuously differentiable
function $h:[\f_1,\infty)\to\mathbb{R}^2$ $($piecewise in the sense of Theorem \ref{novizuzu}$)$. Assume that $\underline{\dim}_B\Gamma>1$, $\Gamma_1:=h((\overline{\f}_1,\infty))$, for some fixed $\overline{\f}_1>\f_1$, and $h([\f_1,\overline{\f}_1])\bigcap\Gamma_1=\emptyset$. Then
$$
\underline{\dim}_B\Gamma_1=\underline{\dim}_B\Gamma,\quad \overline{\dim}_B\Gamma_1=\overline{\dim}_B\Gamma .
$$
\end{lemma}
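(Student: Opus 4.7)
The plan is to decompose $\Gamma=\Gamma_0\cup\Gamma_1$ with $\Gamma_0:=h([\f_1,\overline{\f}_1])$ and to exploit the fact that $\Gamma_0$ is rectifiable so that its contribution to the Minkowski content becomes negligible in dimensions $s>1$; the hypothesis $\underline\dim_B\Gamma>1$ then allows one to ``excise'' $\Gamma_0$ without affecting either box dimension. The only step that draws genuine input from the hypotheses is the verification that the piecewise-smooth arc $\Gamma_0$ is rectifiable, and this is where the finite-piece assumption on $h$ is used; I expect the remainder of the argument to be a direct calculation with $\e$-neighborhoods.

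I would start by establishing $\mathrm{length}(\Gamma_0)<\infty$. By assumption, $h|_{[\f_1,\overline{\f}_1]}$ is continuous and piecewise $C^1$ with only finitely many smooth pieces; on each piece $|h'|$ is continuous on a compact interval, hence bounded, so that piece has finite length. Summing over the finitely many pieces, $\Gamma_0$ has total length $L<\infty$, and the standard tubular neighborhood estimate for rectifiable planar sets furnishes a constant $C>0$ such that $|(\Gamma_0)_\e|\le C\e$ for all sufficiently small $\e>0$.

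Next I would combine this with the elementary sandwich $(\Gamma_1)_\e\subset\Gamma_\e\subset(\Gamma_0)_\e\cup(\Gamma_1)_\e$ (which uses only $\Gamma=\Gamma_0\cup\Gamma_1$, not the disjointness hypothesis), giving
$$
|(\Gamma_1)_\e|\;\le\;|\Gamma_\e|\;\le\;|(\Gamma_0)_\e|+|(\Gamma_1)_\e|\;\le\;C\e+|(\Gamma_1)_\e| .
$$
Dividing by $\e^{2-s}$ for any $s>1$,
$$
\frac{|(\Gamma_1)_\e|}{\e^{2-s}}\;\le\;\frac{|\Gamma_\e|}{\e^{2-s}}\;\le\;\frac{|(\Gamma_1)_\e|}{\e^{2-s}}+C\e^{s-1} ,
$$
and since $\e^{s-1}\to 0$ as $\e\to 0^+$, the outer $\liminf$'s and $\limsup$'s as $\e\to 0$ coincide. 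Hence $\M_*^s(\Gamma)=\M_*^s(\Gamma_1)$ and $\M^{*s}(\Gamma)=\M^{*s}(\Gamma_1)$ for every $s>1$.

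Finally, I would read off equality of box dimensions directly from the definitions. Since $\underline\dim_B\Gamma>1$ forces $\overline\dim_B\Gamma>1$ as well, the defining infima $\inf\{s\ge 0:\M_*^s=0\}$ and $\inf\{s\ge 0:\M^{*s}=0\}$ of the lower and upper box dimensions of $\Gamma$ are realized by values $s>1$; the coincidence of Minkowski contents in that range therefore lifts to $\underline\dim_B\Gamma_1=\underline\dim_B\Gamma$ and $\overline\dim_B\Gamma_1=\overline\dim_B\Gamma$, the reverse monotone inequalities being automatic from $\Gamma_1\subset\Gamma$.
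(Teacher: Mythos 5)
Your proof is correct and rests on exactly the mechanism the paper delegates to \cite[Lemma 1]{zuzu}: the excised arc $\Gamma_0=h([\f_1,\overline{\f}_1])$ is rectifiable because it consists of finitely many compact $C^1$ pieces, hence $|(\Gamma_0)_\e|=O(\e)$, the Minkowski contents of $\Gamma$ and $\Gamma_1$ coincide for every $s>1$, and the hypothesis $\underline{\dim}_B\Gamma>1$ lets this transfer to both box dimensions. The only difference is that you carry out the $\e$-neighborhood computation in full rather than citing it, and in doing so you correctly observe that the disjointness assumption $h([\f_1,\overline{\f}_1])\bigcap\Gamma_1=\emptyset$ is not needed for this box-dimension conclusion; the paper keeps it only because it adapts the proof of the cited lemma, in which an injectivity condition played a role.
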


\begin{proof}
The proof is analogous to the proof of \cite[Lemma 1]{zuzu}, but with the following difference.
Here, the curve $\Gamma_2:=\Gamma\backslash\Gamma_1=h([\varphi_1,\overline{\varphi}_1])$ is rectifiable due to piecewise rectifiability of $h$ and due to the finite number of pieces in segment $(\varphi_1,\overline{\varphi}_1]$. The function $h$ is piecewise rectifiable due to its piecewise smoothness and continuity. Also, by careful examination of the proof of \cite[Lemma 1]{zuzu}, it follows that we can substitute the injectivity assumption on $h$ with the weaker condition that $h([\f_1,\overline{\f}_1])\bigcap\Gamma_1=\emptyset$. (For more details see \cite[Lemma~1]{zuzu}.)
\qed
\end{proof}

\begin{proof}[Theorem \ref{novizuzu}]
The proof is analogous to the proof of \cite[Theorem~5]{zuzu}, but using the new Lemma \ref{novalemazuzu}.
\qed
\end{proof}

\begin{remark}
Notice the difference between the assumptions of Theorem \ref{ffa} and Theorem \ref{novizuzu}. In Theorem \ref{ffa} the function $f$ is decreasing and of class $C^2$. By careful examination of the proof of \cite[Theorem 5]{zuzu}, one can see that $f$ being decreasing is used only in the sense of nonincreasing, that is, not strictly decreasing, hence in Theorem \ref{ffa} we can assume that $f$ is nonincreasing. Additional smoothness of $f$ and additional conditions regarding constants $M_1$ and $M_3$ in Theorem \ref{ffa} are used only in the calculations of Minkowski contents in \cite[Theorem 5]{zuzu} which we exclude from our Theorem \ref{novizuzu}. Further reduction in smoothness of $f$ from continuously differentiable to a piecewise continuously differentiable function can be found in Lemma \ref{novalemazuzu}.
\end{remark}

Theorem \ref{biliplema} deals with a spiral $\Gamma'$ described by $r=f(\f)$, where $f$ is increasing on some parts, see Definitions \ref{def_wavy_function} and \ref{def_wavy_spiral}. We call this new property of $\Gamma'$ \emph{spiral  waviness}.

\begin{definition}\label{def_wavy_function} Let $r:[t_0,\infty)\rightarrow(0,\infty)$ be a $C^1$ function. Assume that $r'(t_0)\leq 0$. We say that $r=r(t)$ is a \emph{wavy function} if the sequence $(t_n)$ defined inductively by:
\begin{eqnarray}
t_{2k+1} & := & \inf\{t : t>t_{2k}, r'(t)>0\},\quad k\in\mathbb{N}_0 , \nonumber\\
t_{2k+2} & := & \inf\{t : t>t_{2k+1}, r(t)=r(t_{2k+1})\},\quad k\in\mathbb{N}_0 , \nonumber
\end{eqnarray}
is well-defined, and satisfies the \emph{waviness condition}:
\begin{equation}\label{def_SRC}
\left\{
\begin{minipage}{10cm}
\begin{itemize}
\item[(i)] The sequence $(t_n)$ is increasing and $t_n\to\infty$ as $n\to\infty$.
\item[(ii)] There exists $\varepsilon>0$, such that for all $k\in\mathbb{N}_0$ holds $t_{2k+1} - t_{2k} \geq\varepsilon$.
\item[(iii)] For all $k$ sufficiently large it holds
    $\mathop{\mathrm{osc}}\limits_{t\in[t_{2k+1},t_{2k+2}]} r(t) = o\left(t_{2k+1}^{-\alpha-1}\right)$,\newline $\alpha\in(0,1)$,
\end{itemize}
\end{minipage}
\right.
\end{equation}
where $\mathop{\mathrm{osc}}\limits_{t\in I} r(t)=\max\limits_{t\in I} r(t) - \min\limits_{t\in I} r(t)$.
\end{definition}

Notice that $\min\limits_{t\in[t_{2k+1},t_{2k+2}]} r(t)=r(t_{2k+1})$. Condition $(i)$ means that the property of waviness of $r=r(t)$ is global on the whole domain.
Condition~$(ii)$ is connected to an assumption of Lemma~\ref{tehnicka2}.
Condition $(iii)$ is a condition on a decay rate on the sequence of oscillations of $r$ on $I_k=[t_{2k+1},t_{2k+2}]$, for $k$ sufficiently large. Also, notice that condition $r'(t_0)\leq 0$ assures that $t_1$ is well-defined. For an example of function $r(t)$, see Figure \ref{fig:funkcija_r}.

\begin{remark}\label{remark-tehnicki-uvjet}
Conditions (i) and (ii) in the waviness condition (\ref{def_SRC}) are not entirely independent. From (ii) and if $(t_n)$ is increasing follows that $t_n\to\infty$ as $n\to\infty$, but from (i) does not follow (ii). So, condition (ii) plus  $(t_n)$ increasing is stronger than condition (i).
\end{remark}

\setlength{\unitlength}{0.9\textwidth}
\begin{figure}
\begin{center}
\begin{picture}(1,0.64)
\put(0,0){\includegraphics[width=0.9\textwidth]{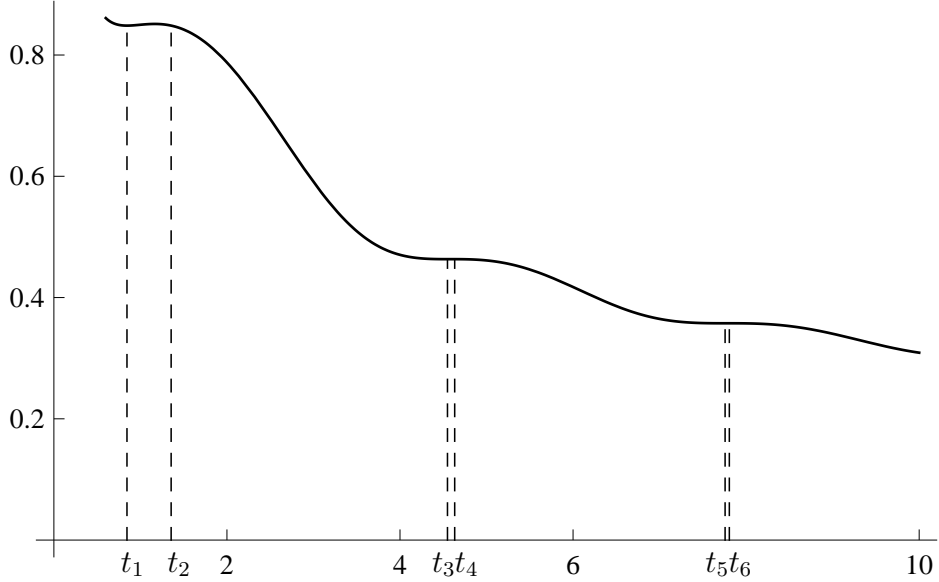}}
\put(0.12,0.01){$t_1$}
\put(0.17,0.01){$t_2$}
\put(0.455,0.01){$t_3$}
\put(0.48,0.01){$t_4$}
\put(0.75,0.01){$t_5$}
\put(0.775,0.01){$t_6$}
\end{picture}
\end{center}
\caption{Function $r(t)$ for $p(t)=t^{-1/2}$, see Lemma~\ref{tehnicka2}, $t_0=0.6$. This is a wavy function, see Definition~\ref{def_wavy_function}, with local minima at $t_{2k+1}$, $k=0,1,\ldots$}
\label{fig:funkcija_r}
\end{figure}

\setlength{\unitlength}{0.7\textwidth}
\begin{figure}
\begin{center}
\begin{picture}(1,0.82)
\put(0,0){\includegraphics[width=0.7\textwidth]{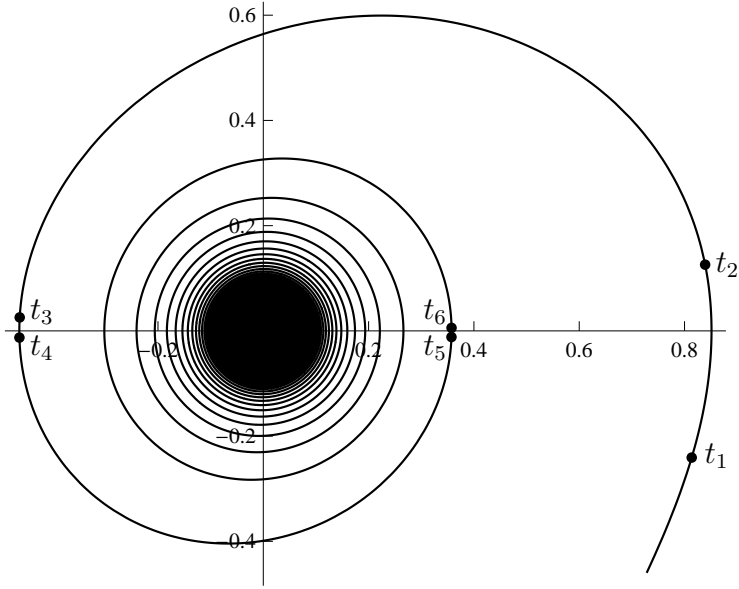}}
\put(0.97,0.17){$t_1$}
\put(0.985,0.435){$t_2$}
\put(0.035,0.37){$t_3$}
\put(0.035,0.32){$t_4$}
\put(0.58,0.32){$t_5$}
\put(0.58,0.37){$t_6$}
\end{picture}
\end{center}
\caption{Spiral $\Gamma'$ for $p(t)=t^{-1/2}$, see Lemma~\ref{tehnicka2}, $t_0=0.6$ is a wavy spiral, see Definition~ \ref{def_wavy_spiral}. Highlighted points correspond to parameters $t_k$, $k=1,2,\ldots$ }
\label{fig:spirala}
\end{figure}

\begin{definition}\label{def_wavy_spiral} Let a spiral $\Gamma'$, given in polar coordinates by $r=f(\f)$, where $f$ is a given function. If there exists increasing or decreasing function of class $C^1$, $\f=\f(t)$, such that $r(t)=f(\f(t))$ is a wavy function, then we say $\Gamma'$ is a \emph{wavy spiral}.
\end{definition}

For an example of spiral $\Gamma'$, see Figure \ref{fig:spirala}. Now, using Theorem \ref{novizuzu} and Lemma \ref{novalemazuzu} we prove the following Theorem \ref{biliplema}.

\begin{theorem}[Box dimension of a wavy spiral]{\label{biliplema}}
Let $t_0>0$ and assume $r:[t_0,\infty)\rightarrow(0,\infty)$ is a wavy function. Assume that $\f:[t_0,\infty)\rightarrow[\f_0,\infty)$ is an increasing function of class $C^1$ such that $\f(t_0)=\f_0>0$ and there exists $\bar{\f}_0\in\eR$ such that
\begin{equation} \label{uvjetfikaot}
|(\f(t)-\bar{\f}_0)-(t-t_0)|\to 0\ \ \textrm{as}\ \ t\to\infty.
\end{equation}
Let $f:[\f_0,\infty)\rightarrow(0,\infty)$ be defined by $f(\f(t))=r(t)$. Assume that $\Gamma'$ is a spiral defined in polar coordinates by $r=f(\f)$, satisfying $(\ref{def_spiral})$.
Let $\alpha\in(0,1)$ is the same value as in $(\ref{def_SRC})${\rm(iii)} for wavy function $r$, and assume $\varepsilon'$ is such that $0<\varepsilon'<\varepsilon$, where $\varepsilon$ is defined by $(\ref{def_SRC})${\rm(ii)} for wavy function $r$. Assume that there exist positive constants $\underline{m}$, $\overline{m}$, $\underline{a}'$ and $M$ such that for all $\f\geq\f_0$,
\begin{equation}\label{biliplema_cond1}
\underline{m}\f^{-\alpha}\leq f(\f)\leq \overline{m}\f^{-\alpha} ,
\end{equation}
\begin{equation}\label{biliplema_cond3}
|f'(\f)|\leq M\f^{-\alpha-1} ,
\end{equation}
and for all $\triangle\f$, such that $\theta\leq\triangle\f\leq 2\pi+\theta$, there holds
\begin{equation}\label{biliplema_cond2b}
\underline{a}'\f^{-\alpha-1}\leq f(\f)-f(\f+\triangle\f) ,
\end{equation}
where $\theta:=\min\left\{\varepsilon',\pi\right\}$.

Then $\Gamma'$ is a wavy spiral and
$$
\dim_B\Gamma'=\frac{2}{1+\alpha}.
$$
\end{theorem}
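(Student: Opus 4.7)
The plan is to reduce Theorem \ref{biliplema} to Theorem \ref{novizuzu} by replacing the wavy spiral $\Gamma'$ with a nonincreasing spiral $\tilde\Gamma$ obtained by ``flattening'' the wavy arcs, and then showing that the two spirals have the same box dimension.

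Using the sequence $(t_n)$ from Definition \ref{def_wavy_function}, I would first define $\tilde r(t):=r(t)$ on each descending interval $[t_{2k},t_{2k+1}]$ and $\tilde r(t):=r(t_{2k+1})$ on each wavy interval $[t_{2k+1},t_{2k+2}]$. Since $r(t_{2k+2})=r(t_{2k+1})$, the resulting $\tilde r$ is continuous, piecewise $C^1$ and nonincreasing; transferring via $\f$ gives $\tilde f(\f(t)):=\tilde r(t)$, whose polar graph $\tilde\Gamma$ is a nonincreasing spiral of the same ``large-scale'' shape as $\Gamma'$. Next I would verify the hypotheses of Theorem \ref{novizuzu} for $\tilde f$. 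The power bounds and the derivative bound are inherited from (\ref{biliplema_cond1}) and (\ref{biliplema_cond3}) on smooth pieces (with $\tilde f'=0$ on the flat ones), and (\ref{uvjetfikaot}) ensures the change of variables between $t$ and $\f$ is asymptotically trivial. The crucial estimate is the spacing condition $\tilde f(\f)-\tilde f(\f+2\pi)\gtrsim\f^{-\alpha-1}$: since $\tilde f(\f)=f(\f^\flat)$ for some $\f^\flat\le\f$ belonging to a monotone region, and similarly $\tilde f(\f+2\pi)=f(\f^\sharp)$, one shows $\f^\sharp-\f^\flat\in[\theta,2\pi+\theta]$ using conditions (ii) and (iii) of (\ref{def_SRC}) together with (\ref{biliplema_cond3}) to bound the $\f$-lengths of wavy intervals, and then applies (\ref{biliplema_cond2b}). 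The flexibility in $\triangle\f$ in that hypothesis is introduced precisely to absorb this slack. Theorem \ref{novizuzu} then gives $\dim_B\tilde\Gamma=2/(1+\alpha)$.

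It remains to transfer this dimension to $\Gamma'$. The symmetric difference $\Gamma'\triangle\tilde\Gamma$ consists of wavy arcs, one per interval $[t_{2k+1},t_{2k+2}]$, of radial extent $o(\f^{-\alpha-1})$ by (iii), which is strictly smaller than the inter-turn spacing $\gtrsim\f^{-\alpha-1}$ of $\tilde\Gamma$ furnished by (\ref{biliplema_cond2b}). Hence each wavy arc is sandwiched between two consecutive turns of $\tilde\Gamma$, and an $\varepsilon$-neighborhood comparison---after using the excision Lemma \ref{novalemazuzu} to discard the initial finite portion where (iii) is not yet in force---gives $|\Gamma'_\varepsilon|\simeq|\tilde\Gamma_\varepsilon|$ as $\varepsilon\to 0$. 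Therefore $\dim_B\Gamma'=\dim_B\tilde\Gamma=2/(1+\alpha)$.

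The two steps above also contain the main obstacles. The verification of the spacing condition for $\tilde f$ relies on showing that the $\f$-lengths of wavy intervals are uniformly at most $\theta$ eventually, which requires combining (\ref{biliplema_cond3}) with the $o(\cdot)$ strength of (iii) and is the reason hypothesis (\ref{biliplema_cond2b}) must be stated with the range $\triangle\f\in[\theta,2\pi+\theta]$ rather than a single value $\triangle\f=2\pi$. The $\varepsilon$-neighborhood comparison, in turn, requires a uniform geometric control on three competing scales---oscillation amplitude $o(\f^{-\alpha-1})$, inter-turn spacing of order $\f^{-\alpha-1}$, and the observation scale $\varepsilon$---so that the excess contribution from the wavy arcs to the Minkowski content is of strictly smaller order at every scale.
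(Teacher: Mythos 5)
First, a point of comparison: this paper does not actually contain a proof of Theorem \ref{biliplema} --- immediately after the statement it says ``The proof of Theorem \ref{biliplema} is given in \cite{bessel}'' --- so there is no in-paper argument to measure yours against, and I can only assess your sketch on its own terms. Its architecture (flatten the wavy arcs to obtain a nonincreasing, piecewise smooth spiral $\tilde\Gamma$, apply Theorem \ref{novizuzu}, then transfer the dimension back to $\Gamma'$) is reasonable, and your reading of why hypothesis (\ref{biliplema_cond2b}) is stated for a whole range of $\triangle\f$ is correct. But two steps are genuinely unsupported as written. The first concerns your claim that the angular lengths $L_k=\f(t_{2k+2})-\f(t_{2k+1})$ of the wavy intervals are eventually at most $\theta$, derived ``using conditions (ii) and (iii) of (\ref{def_SRC}) together with (\ref{biliplema_cond3})''. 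That does not follow: condition (ii) bounds the \emph{descending} intervals from below, not the wavy ones from above, and a small oscillation $o\bigl(t_{2k+1}^{-\alpha-1}\bigr)$ combined with $|f'(\f)|\le M\f^{-\alpha-1}$ is perfectly compatible with $r$ hovering just above its minimum $r(t_{2k+1})$ over an arbitrarily long angular stretch. Without $L_k<\theta$ your key inclusion $\f^\sharp-\f^\flat\in[\theta,2\pi+\theta]$ fails, since radial decrease alone only gives $L_k\le2\pi$, which allows $\f^\sharp-\f^\flat$ to drop below $\theta$. The bound you need comes from (\ref{biliplema_cond2b}) itself: taking $\triangle\f=\theta$ at $\f=\f(t_{2k+1})$ gives $f(\f(t_{2k+1})+\theta)<r(t_{2k+1})$, so $r$ returns to the value $r(t_{2k+1})$ before the angle advances by $\theta$, i.e.\ $L_k<\theta$; then $\f^\sharp-\f^\flat\ge2\pi-\theta\ge\pi\ge\theta$ and (\ref{biliplema_cond2b}) applies.

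The second gap is the concluding assertion $|\Gamma'_\e|\simeq|\tilde\Gamma_\e|$. The two curves lie within $o(\f^{-\alpha-1})$ of one another, but this discrepancy is not uniform along the spiral: for small $\e$ it exceeds $\e$ on the outer turns, so a single Hausdorff-distance estimate does not yield $\Gamma'_\e\subset\tilde\Gamma_{2\e}$ globally. You must cut at $\f_\e\simeq\e^{-1/(1+\alpha)}$ (the same threshold as in the nucleus/tail decomposition), use the neighborhood inclusion only on $\f\ge\f_\e$, where the discrepancy is $O(\e)$, and dispose of the outer piece by the length estimate coming from (\ref{biliplema_cond1}) and (\ref{biliplema_cond3}), which gives tail length $O(\f_\e^{1-\alpha})$ and hence an $\e$-neighborhood of measure $O(\e^{2-d})$ with $d=2/(1+\alpha)$. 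Even then one only obtains $|\Gamma'_\e|\le|\tilde\Gamma_{C\e}|+O(\e^{2-d})$ and the symmetric reverse inequality, which suffices for the box dimension but is weaker than the stated $\simeq$ (the latter would require a nondegeneracy or doubling property that Theorem \ref{novizuzu} does not assert). Both gaps are fixable, but as written the monotone-comparison step and the transfer step each rest on claims that are not consequences of the hypotheses you invoke.
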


The proof of Theorem \ref{biliplema} is given in \cite{bessel}.

\smallskip

Now, Theorem \ref{biliplema} enables us to calculate the box dimension of a spiral generated by a chirp, which is one of the main results of this paper.

\begin{theorem}[Chirp--spiral comparison]{\label{s-c-p}}
Let $\alpha>0$. Assume that  $X:(0,1/\tau_0]\to\eR$, $\tau_0>0$, $X(\tau)=P(\tau)\sin 1/\tau$, where $P(\tau)$ is a positive function such that $P(\tau)\sim_3\tau^\alpha$ as $\tau\to 0$.
Define $x(t):=X(1/t)$ and a continuous function $\varphi (t)$ by
$\tan \f(t)=\frac{\dot x(t)}{x(t)}$.

\begin{itemize}
\item[\rm{(i)}] If $\alpha\in(0,1)$ then the planar curve $\Gamma:=\{(x(t),\dot x(t))\in\eR: t\in[\tau_0,\infty)\}$ generated by $X$ is a wavy spiral $r=f(\f)$, $\f\in(-\infty,-\phi_0]$ near the origin. We have $f(\f)\simeq|\f|^{-\alpha}$ as $\f\to-\infty$, and
    $$
    \dim_{ph}(x):=\dim_B\Gamma=\frac{2}{1+\alpha} .
    $$
\item[\rm{(ii)}] If $\alpha>1$ then the planar curve $\Gamma:=\{(x(t),\dot x(t))\in\eR: t\in[\tau_0,\infty)\}$ is a rectifiable wavy spiral near the origin.
\end{itemize}

\end{theorem}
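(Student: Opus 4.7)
My plan is to reduce part~(i) to Theorem~\ref{biliplema} and part~(ii) to a direct length estimate. The bulk of the work is the asymptotic analysis of the phase curve $(x(t),\dot x(t))$.

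Using $P(\tau)\sim_3\tau^\alpha$, the leading-order terms as $t\to\infty$ of $x(t)$, $\dot x(t)$, and $\ddot x(t)$ are $t^{-\alpha}\sin t$, $t^{-\alpha}\cos t$, and $-t^{-\alpha}\sin t$ respectively, with corrections of order $t^{-\alpha-1}$. Setting $r(t):=\sqrt{x(t)^2+\dot x(t)^2}$, a direct expansion yields
\[
r(t)=t^{-\alpha}-\frac{\alpha}{2}t^{-\alpha-1}\sin 2t+O(t^{-\alpha-2}),\qquad r'(t)=-2\alpha t^{-\alpha-1}\cos^2 t+O(t^{-\alpha-2}).
\]
In particular $r\simeq t^{-\alpha}$, $|r'|=O(t^{-\alpha-1})$, and $r'\le 0$ except possibly in small windows near the zeros of $\cos t$, i.e.\ near $t=\pi/2+k\pi$. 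Likewise $\tan\f(t)=\dot x/x=\cot t+O(t^{-1})$, so $\f$ may be chosen continuously with $\f(t)=\pi/2-t+o(1)$, eventually strictly decreasing to $-\infty$. Setting $\tilde\f(t):=-\f(t)$ gives an increasing $C^1$ function with $\tilde\f'(t)\to 1$ satisfying the affine asymptotic~(\ref{uvjetfikaot}), and defining $\tilde f$ by $\tilde f(\tilde\f(t))=r(t)$ the estimates on $r$ and $r'$ give~(\ref{biliplema_cond1}) and~(\ref{biliplema_cond3}).

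The lower waviness estimate~(\ref{biliplema_cond2b}) holds because over any $\tilde\f$-interval of length $\ge\theta$ the monotone part $t^{-\alpha}$ of $r$ decreases by $\simeq\theta\, t^{-\alpha-1}$, dominating the oscillatory $O(t^{-\alpha-1})$ correction once $\theta>0$ is fixed. To verify that $r(t)$ is a wavy function in the sense of Definition~\ref{def_wavy_function}, the sign information on $r'$ makes the sequence $(t_n)$ well-defined with $t_{2k+1}$ close to $\pi/2+k\pi$ and $t_{2k+2}-t_{2k+1}=o(1)$; conditions (i) and (ii) follow from $t_{2k+1}-t_{2k}\to\pi$, and (iii) follows because both the monotone and oscillatory components of $r$ vary by $o(t_{2k+1}^{-\alpha-1})$ on the shrinking interval $[t_{2k+1},t_{2k+2}]$. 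Applying Theorem~\ref{biliplema} to $\tilde f,\tilde\f$, equivalently to the mirror image of~$\C$ admitted by the spiral definition, then yields $\dim_B\C=2/(1+\alpha)$ and $f(\f)\simeq|\f|^{-\alpha}$, proving~(i). For~(ii) the same asymptotic analysis again produces the wavy-spiral structure, while
\[
\int_{\tau_0}^{\infty}\sqrt{\dot x(t)^2+\ddot x(t)^2}\,dt\simeq\int_{\tau_0}^{\infty}t^{-\alpha}\,dt<\infty
\]
establishes rectifiability.

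The main obstacle will be the coordinated verification of the waviness data: locating the sequence $(t_n)$ precisely near the degeneracy points of $r'$, establishing $t_{2k+2}-t_{2k+1}=o(1)$ so that Definition~\ref{def_wavy_function}(iii) holds, and balancing the monotone decrease of $t^{-\alpha}$ against the oscillatory correction $-\frac{\alpha}{2}t^{-\alpha-1}\sin 2t$ when proving the lower bound~(\ref{biliplema_cond2b}).
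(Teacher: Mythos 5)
Your overall strategy coincides with the paper's: verify the hypotheses of Theorem~\ref{biliplema} (after passing to the mirror image so that $\f$ is increasing) and, for $\alpha>1$, integrate the speed of the parametrization. However, there is a genuine gap at the point you yourself flag as the main obstacle, namely the lower bound~(\ref{biliplema_cond2b}). Writing $r=p+\frac{p'}{2}\sin 2t+O(t^{-\alpha-2})$ with $p(t)=P(1/t)$, the monotone part drops by about $\alpha\,\triangle t\,t^{-\alpha-1}$ over an interval of length $\triangle t$, while the oscillatory part can swing by as much as $\alpha\,|\sin\triangle t|\,t^{-\alpha-1}$ (it is $-\tfrac{p'}{2}\bigl(\sin 2(t+\triangle t)-\sin 2t\bigr)=p'\sin(\triangle t)\cos(2t+\triangle t)+o(t^{-\alpha-1})$). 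These are of the \emph{same} order with comparable constants, so the assertion that a decrease of size $\simeq\theta\,t^{-\alpha-1}$ ``dominates the oscillatory $O(t^{-\alpha-1})$ correction once $\theta>0$ is fixed'' is not an argument: domination rests on the strict inequality $|\sin\triangle t|<\triangle t$ together with control of the multiplicative errors allowed by $P\sim_3\tau^\alpha$, not on an order-of-magnitude comparison. The paper's Lemma~\ref{tehnicka2} resolves this only for $\triangle\f>1$ (it uses crude envelopes $r_{\min},r_{\max}$ and obtains the factor $\triangle\f-\bigl(\frac{1+\e}{1-\e}\bigr)^3$), and this is precisely why the proof must arrange the waviness gap $t_{2k+1}-t_{2k}>\pi/3$ and then choose $\e=\pi/3$, $\e'=(\pi/3+1)/2\in(1,\pi/3)$, so that $\theta=\min\{\e',\pi\}>1$. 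Your proposal never confronts this coupling between the admissible $\theta$ in~(\ref{biliplema_cond2b}) and the $\e$ produced by the waviness verification; as written, the two halves of your argument do not demonstrably fit together.

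Two secondary points. First, the expansion $r(t)=t^{-\alpha}-\frac{\alpha}{2}t^{-\alpha-1}\sin 2t+O(t^{-\alpha-2})$ is not justified by $P(\tau)\sim_3\tau^\alpha$: that hypothesis only gives $p(t)-t^{-\alpha}=o(t^{-\alpha})$, which may be much larger than $t^{-\alpha-1}$, so the monotone part must be kept as $p(t)$ itself (with $p'\sim-\alpha t^{-\alpha-1}$), not replaced by $t^{-\alpha}$. Second, you omit the excision step: $\Gamma$ is only a (wavy) spiral for $t$ large, and one needs an argument — the paper's Lemma~\ref{novalemazuzu}, together with the verification that the initial arc does not meet the tail — to discard the initial rectifiable piece without altering the box dimension. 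Neither of these is fatal, but both require explicit treatment; the first in particular silently propagates into your verification of the waviness condition~(\ref{def_SRC}).
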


The proof of Theorem~\ref{s-c-p} consists of checking out the conditions of Theorem~\ref{biliplema}. The following lemmas make this checking easy.

\begin{lemma}\label{pp} Let $\alpha>0$ and assume that $P(\tau)$, $\tau\in(0,1/t_0]$, $t_0>0$, be such that $P(\tau)\sim_3\tau^\alpha$ as $\tau\to 0$. Then
$p(t):=P(\frac{1}{t})\sim_3t^{-\alpha}$ as $t\to\infty$ and vice versa. Furthermore, we have:
\begin{equation}\label{plim}
 \lim_{t\to \infty} \frac{p'(t)}{p(t)}=0, \ \lim_{t\to \infty}\frac{p''(t)}{p(t)}=0,
\end{equation}
\begin{equation}\label{pnej}
 -\frac{p(t)}{p'(t)}\sim \frac{t}{\alpha}, \ \ -\frac{2p'(t)}{p''(t)}\sim \frac{2t}{\alpha+1}\ \mathrm{as}\ t\to\infty,
\end{equation}
\begin{equation}\label{psup}
 \sup_{t\in[t_0,\infty)}\left( -\frac{p(t)}{p'(t)}\right)'<\infty, \ \sup_{t\in[t_0,\infty)}\left( -\frac{2p'(t)}{p''(t)}\right)'<\infty.
\end{equation}
\end{lemma}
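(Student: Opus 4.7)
The plan is to derive everything from the chain rule applied to $p(t) = P(1/t)$. Writing $\tau = 1/t$, one obtains
$$
p'(t) = -t^{-2}P'(\tau), \quad p''(t) = 2t^{-3}P'(\tau) + t^{-4}P''(\tau),
$$
$$
p'''(t) = -6t^{-4}P'(\tau) - 6t^{-5}P''(\tau) - t^{-6}P'''(\tau).
$$
The hypothesis $P^{(j)}(\tau) \sim (\tau^\alpha)^{(j)}$ as $\tau \to 0$ for $j=0,1,2,3$, combined with $\tau = 1/t$ and a short collection of like powers of $t$, yields $p^{(j)}(t) \sim (t^{-\alpha})^{(j)}$ as $t \to \infty$ for each $j$; that is, $p(t) \sim_3 t^{-\alpha}$. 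The reverse implication is the same computation with the roles of $p$ and $P$ exchanged.

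From $p \sim_3 t^{-\alpha}$ the identities (\ref{plim}) and (\ref{pnej}) are immediate quotient computations: $p'(t)/p(t) \sim -\alpha/t \to 0$, $p''(t)/p(t) \sim \alpha(\alpha+1)/t^2 \to 0$, and reciprocating gives $-p/p' \sim t/\alpha$ and $-2p'/p'' \sim 2t/(\alpha+1)$ as $t \to \infty$.

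For (\ref{psup}), direct differentiation yields
$$
\left(-\frac{p}{p'}\right)' = \frac{p\,p''}{(p')^2} - 1, \quad \left(-\frac{2p'}{p''}\right)' = -2 + \frac{2\,p'\,p'''}{(p'')^2}.
$$
Substituting the $\sim_3$ asymptotics established above, one gets $p\,p''/(p')^2 \to (\alpha+1)/\alpha$ and $p'\,p'''/(p'')^2 \to (\alpha+2)/(\alpha+1)$ as $t \to \infty$. Hence both quantities in (\ref{psup}) converge to finite limits at infinity, and combined with continuity on any initial compact subinterval this forces the suprema to be finite.

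The only delicate point, and what I expect to be the main technical obstacle, is to verify that $p'$ and $p''$ are nowhere zero on $[t_0, \infty)$, since otherwise the expressions in (\ref{pnej}) and (\ref{psup}) would be singular. This is guaranteed eventually by the leading behaviors $p' \sim -\alpha t^{-\alpha-1}$ and $p'' \sim \alpha(\alpha+1)\,t^{-\alpha-2}$; on any initial compact segment one can enlarge $t_0$ if necessary, which is harmless for all subsequent applications of the lemma.
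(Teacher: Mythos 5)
Your proposal is correct and is exactly the direct chain-rule verification that the paper has in mind: the paper gives no proof at all, stating only that ``the claims of Lemma \ref{pp} follow directly from the assumptions,'' and your computation (including the coefficient bookkeeping $2\alpha+\alpha(\alpha-1)=\alpha(\alpha+1)$, the limits $(\alpha+1)/\alpha$ and $(\alpha+2)/(\alpha+1)$, and the remark about enlarging $t_0$ so that $p'$ and $p''$ are nonvanishing) supplies the omitted details accurately.
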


The claims of Lemma \ref{pp} follow directly from the assumptions.

\begin{lemma}{\label{tehnicka2}}
Let $\alpha\in(0,1)$ and
$$
r(t)=p(t)\sqrt{1+\frac{p'^2(t)}{p^2(t)} \sin^ 2t+\frac{p'(t)}{p(t)}\sin 2t},\quad t\in[t_0,\infty),\ t_0>0,
$$
where $p(t)\sim_1t^{-\alpha}$ as $t\to\infty$.

Let $C\in\eR$ and assume that $t(\f)=\f+C+O(\f^{-1})$ as $\f\to\infty$. Let $\triangle\f>1$. Then there exists constant $k>0$, independent of $\f$ and $\triangle\f$, such that for all $\f$ sufficiently large it holds
$$
r(t(\f))-r(t(\f+\triangle\f)) \geq k\f^{-\alpha-1}(1+O(\f^{-1})).
$$
\end{lemma}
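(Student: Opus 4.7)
The plan is to Taylor-expand $r(t)$ so as to isolate its non-oscillatory main part, split $r(t_1)-r(t_2)$ accordingly, and use a sharp scalar inequality to show that the main part dominates the oscillatory correction precisely when $\Delta\varphi>1$.

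Set $t_1:=t(\varphi)$ and $t_2:=t(\varphi+\Delta\varphi)$, so by hypothesis $t_1\sim\varphi$ and $\Delta t:=t_2-t_1=\Delta\varphi+O(\varphi^{-1})$. Since $p'/p=O(1/t)$ by Lemma~\ref{pp}, the quantity $A(t):=(p'/p)^2\sin^2 t+(p'/p)\sin 2t$ under the square root satisfies $A(t)=O(t^{-1})$, so expanding $\sqrt{1+A}=1+A/2+O(A^2)$ gives $r(t)=p(t)+\tfrac{1}{2}p'(t)\sin 2t+O(t^{-\alpha-2})$. Consequently
$$r(t_1)-r(t_2)=\bigl[p(t_1)-p(t_2)\bigr]+\tfrac{1}{2}\bigl[p'(t_1)\sin 2t_1-p'(t_2)\sin 2t_2\bigr]+O(\varphi^{-\alpha-2}).$$

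I would estimate the first bracket via $p(t_1)-p(t_2)=-\int_{t_1}^{t_2}p'(s)\,ds$ using $p'(s)=-\alpha s^{-\alpha-1}(1+o(1))$ (which follows from $p\sim_1 t^{-\alpha}$) and then Taylor-expand $t_1^{-\alpha}-t_2^{-\alpha}$, obtaining $p(t_1)-p(t_2)=\alpha\Delta\varphi\cdot\varphi^{-\alpha-1}(1+o(1))$. For the second bracket the crucial step is the sum-to-product identity $\sin 2t_1-\sin 2t_2=-2\cos(t_1+t_2)\sin\Delta t$; combined with $p'(t_i)=-\alpha\varphi^{-\alpha-1}(1+o(1))$ this gives $\tfrac{1}{2}[p'(t_1)\sin 2t_1-p'(t_2)\sin 2t_2]=\alpha\varphi^{-\alpha-1}\cos(t_1+t_2)\sin\Delta\varphi\cdot(1+o(1))$. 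Adding both contributions,
$$r(t_1)-r(t_2)=\alpha\varphi^{-\alpha-1}\bigl[\Delta\varphi+\cos(t_1+t_2)\sin\Delta\varphi\bigr]\bigl(1+o(1)\bigr).$$
The final ingredient is the elementary observation that $x\mapsto x-|\sin x|$ is nondecreasing on each interval $[n\pi,(n+1)\pi]$ (its derivative being $1-\cos x$ or $1+\cos x$), so on $[1,\infty)$ its minimum is attained at $x=1$, namely $1-\sin 1>0$. Hence the bracket is bounded below by $\Delta\varphi-|\sin\Delta\varphi|\geq 1-\sin 1$ uniformly in $\Delta\varphi>1$, and the choice $k:=\alpha(1-\sin 1)$, depending only on $\alpha$, completes the argument.

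The principal obstacle is that the oscillatory correction $\tfrac{1}{2}p'(t)\sin 2t$ has exactly the same asymptotic order $\varphi^{-\alpha-1}$ as the monotone main term $p(t_1)-p(t_2)$, so no triangle-inequality bound on $|p'(t_1)\sin 2t_1-p'(t_2)\sin 2t_2|$ is sharp enough by itself; one must exploit the cancellation between the two endpoint values via the $\sin A-\sin B$ identity, and the scalar inequality $x-|\sin x|\geq 1-\sin 1$ on $[1,\infty)$ is precisely what reveals the role of the hypothesis $\Delta\varphi>1$ as the correct threshold.
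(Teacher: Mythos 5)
Your argument is correct, but it is genuinely different from the one in the paper. The paper's proof never expands the square root: it sandwiches $r(t)$ between the monotone envelopes $r_{\min}(t)=p(t)\sqrt{1-K_1\alpha t^{-1}}$ and $r_{\max}(t)=p(t)\sqrt{1+K_1\alpha t^{-1}+K_1^2\alpha^2t^{-2}}$ (i.e.\ it replaces the oscillatory factor by its worst case at each endpoint \emph{independently}), then rationalizes $r_{\min}(t_1)-r_{\max}(t_2)$ and compares the drift $p^2(t_1)-p^2(t_2)\gtrsim 2\alpha\,\triangle\f\,\f^{-2\alpha-1}$ against the total envelope width $\approx 2K_1\alpha(1+\e)^2\f^{-2\alpha-1}$; positivity then requires $\triangle\f>\bigl(\frac{1+\e}{1-\e}\bigr)^3$, so $\e$ must be chosen \emph{after} $\triangle\f$ and the resulting constant degenerates as $\triangle\f\to1^+$. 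You instead keep the phase information, reducing the oscillatory contribution to $-p'(t_1)\cos(t_1+t_2)\sin\triangle t$ via the sum-to-product identity, which yields the genuinely uniform constant $k=\alpha(1-\sin 1)$ (up to a factor absorbing the error terms) and makes the role of the threshold $\triangle\f>1$ transparent; this is sharper than the paper's bound near $\triangle\f=1$. Two small points of hygiene: (a) when you pull $p'(t_1)$ out of the bracket you discard a cross term $\tfrac12[p'(t_1)-p'(t_2)]\sin 2t_2$, which you should note is $o(\f^{-\alpha-1})$ because $p'\sim-\alpha t^{-\alpha-1}$ (it is \emph{not} $O(\f^{-\alpha-2})$, since $\sim_1$ gives no rate); and (b) your multiplicative $(1+o(1))$ factors should really be additive $o(\f^{-\alpha-1})$ errors --- this is harmless only because the bracket $\triangle\f+\cos(t_1+t_2)\sin\triangle\f$ is bounded below by the positive constant $1-\sin 1$, so do say so. Like the paper's proof, your asymptotics for $p(t_1)-p(t_2)$ and for the cross term are uniform only for $\triangle\f$ in a bounded set, which is all that is used in Steps 5 and 8 of Theorem \ref{s-c-p}.
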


The proof of Lemma \ref{tehnicka2} can be seen in the Appendix.

\begin{proof}[Theorem \ref{s-c-p}]
(i) \emph{Step 1.} (The box dimension is invariant with respect to mirroring of a spiral.) We will prove the equivalent claim, that planar curve $\Gamma'=\{(x(t),-\dot x(t)) : t\in[\tau_0,\infty)\}$ is a wavy spiral defined by $r=f(\f)$, $\f\in[\phi_0, \infty)$, near the origin, satisfying $f(\f)\simeq\f^{-\alpha}$, in polar coordinates, near the origin, and $\dim_B\Gamma'=\frac{2}{1+\alpha}$. It is easy to see that curve $\Gamma$ is a mirror image of curve $\Gamma'$, with respect to the $x$-axis, hence $\Gamma$ is a wavy spiral. Reflecting with respect to the $x$-axis in
the plane is an isometric map. As the isometric map is bi-Lipschitzian and therefore it preserves box dimensions, see \cite[p. 44]{falc}, we see that $\dim_B\Gamma = \dim_B\Gamma'=\frac{2}{1+\alpha}$.

\smallskip

\emph{Step 2.} (Checking condition (\ref{biliplema_cond1}).) From
\begin{eqnarray}
 x(t)&=&p(t) \sin t , \nonumber  \\
 \dot x (t)&=&  p'(t)\sin t +p(t) \cos t , \nonumber
\end{eqnarray}
where $p(t):=P(1/t)$, we compute
\begin{equation}\label{fip}
\tan \f(t)=-\frac{\dot x(t)}{x(t)}=-\frac{p'(t)}{p(t)} -\frac{1}{\tan t}.
\end{equation}
By differentiating (\ref{fip}) we obtain
\begin{equation}\label{dfipodtp}
\frac{d\varphi}{dt}(t)=\cos ^2\varphi(t) \left[\frac{p'^{2}(t)-p(t)p''(t)}{p^2(t)}+\frac{1}{\sin ^2 t}\right].
\end{equation}
Using (\ref{fip}) again, we have
\begin{equation} \label{coskvad_fiodt}
\cos^2 \varphi(t)=\frac{1}{1+\tan^2 \varphi(t)}=\frac{p^2(t)\sin^2 t}{p^2(t) + p'^2(t)\sin^2 t + 2p(t)p'(t)\sin t \cos t} .
\end{equation}
Substituting into (\ref{dfipodtp}) and using (\ref{plim}) we get
\begin{equation}\label{dfi}
\lim_{t\to\infty}\frac{d\varphi}{dt}(t)=1.
\end{equation}
From (\ref{dfi}) it follows that $\f \simeq t\ \mathrm{as}\ t\to\infty$ and
\begin{equation}\label{rkvp}
r^2(t)=(x(t))^2+(-\dot x(t))^2 = p^2(t)+p'^2(t)\sin ^2 t+p(t)p'(t)\sin 2t
\end{equation}
implies
\begin{equation}\label{fkaotkaofi}
f(\f(t)) = r(t)\simeq t^{-\alpha}\simeq \f^{-\alpha}\ \mathrm{as}\ t\to\infty .
\end{equation}
Notice that from (\ref{rkvp}) it follows that function $r(t)$ is of class $C^2$ and by substituting (\ref{coskvad_fiodt}) in (\ref{dfipodtp}), respecting (\ref{rkvp}), we see that function $\f(t)$ is of class~$C^1$.

\smallskip

\emph{Step 3.} (Checking condition (\ref{biliplema_cond3}).) On the other hand, differentiating (\ref{rkvp}) we obtain that
\begin{eqnarray} \label{drpodtp}
\frac{dr}{dt}(t)=\left[2p(t)p'(t)\cos^2 t \right. & + & \frac{2p'^2(t)+p(t)p''(t)}{2}\sin 2t \\
  & + & \left. p'(t)p''(t)\sin^2 t\right]\frac{1}{r(t)} . \nonumber
\end{eqnarray}
Also, from (\ref{drpodtp}) we have
\begin{equation} \label{drpodtp'}
\frac{dr}{dt}(t)=\frac{2p(t)p'(t)}{r(t)}\cos^2 t + O(t^{-\alpha-2})\ \mathrm{as}\ t\to\infty.
\end{equation}
Since $\frac{dr}{dt}(t)=f'(\f)\cdot\frac{d\f}{dt}(t)$ and since by (\ref{dfi}) we have  $\frac{d\f}{dt}(t)\simeq 1$ as $t\to\infty$, there exists $C_0>0$ and $C_1>C_0$ such that
$$
|f'(\varphi)|\le C_0 t^{-\alpha -1}\le C_1 \f^{-\alpha-1}\ \textrm{as}\ \f\to\infty .
$$

\smallskip

\emph{Step 4.} (Checking condition (\ref{uvjetfikaot}).) Using (\ref{fip}) and \cite[Lemma 7]{bessel}, we obtain
$$
\tan\f(t)=-(\cot t+O(t^{-1}))=-\cot(t+O(t^{-1}))=\tan(t+\frac{\pi}{2}+O(t^{-1}))
$$
as $t\to\infty$. Since function $\f(t)$ is continuous by the definition and $O(t^{-1})<\pi$ for $t$ large enough, then there exists $k\in\mathbb{Z}$ such that
$$
\f(t)=(t+\frac{\pi}{2}+k\pi)+O(t^{-1})\ \mathrm{ as\ } t\to\infty.
$$
From the definition of $\f(t)$ we conclude that  we may take without loss of generality $k=0$. Finally, we get
\begin{equation} \label{fiprekot}
\f(t)=\left(t+\frac{\pi}{2}\right)+O(t^{-1})\ \mathrm{ as\ } t\to\infty.
\end{equation}

\smallskip

\emph{Step 5.} (Checking condition (\ref{biliplema_cond2b}).) From (\ref{dfi}) it follows that there exists $\tau_1\geq \tau_0$ such that $\frac{d\f}{dt}(t)>0$ for all $t\geq\tau_1$ hence the function $\f(t)$ is increasing for all $t$ sufficiently large. As function $\f(t)$ is continuous, we conclude that for all $\f$ large enough there exists inverse function $t=t(\f)$ of function $\f=\f(t)$ and
$$
t(\f)= \left(\f-\frac{\pi}{2}\right)+O(\f^{-1})\ \mathrm{as\ } \f\to\infty.
$$
Define value $\phi_1:=\f(\tau_1)$ and notice that we can take $\tau_1$ sufficiently large such that $\phi_1\geq\phi_0$.

From (\ref{rkvp}) we obtain
$$
r(t)=p(t)\sqrt{1+\frac{p'^2(t)}{p^2(t)} \sin^ 2t+\frac{p'(t)}{p(t)}\sin 2t}.
$$

By Lemma \ref{tehnicka2} we conclude that for fixed $\triangle\f>1$ we have
\begin{equation}\label{fiidelfi}
f(\f)-f(\f+\triangle\f)=r(t(\f))-r(t(\f+\triangle\f))\geq k_1\f^{-\alpha-1},
\end{equation}
provided $\f$ is large enough. Moreover, by careful examination of the proof of Lemma~\ref{tehnicka2}, we conclude that statement (\ref{fiidelfi}) uniformly holds for every $\triangle\f$ from a bounded interval whose lower bound is greater than $1$, also provided $\f$ is large enough. (Notice we will have to take $\theta$ from Theorem \ref{biliplema} to be larger than $1$.)

\smallskip

\emph{Step 6.} ($\Gamma'$ is a spiral near the origin.) Now we can prove that $\Gamma'$ is a spiral near the origin, that is, $f(\f)$ satisfies condition (\ref{def_spiral}) near the origin. First, from (\ref{fkaotkaofi}) it follows that $f(\f)\to 0$ as $\f\to\infty$. Second, from (\ref{fiidelfi}) it follows that $f(\f)$ is radially decreasing for all $\f$ large enough, that is, there exists $\phi_2\geq\phi_1$ such that $f|_{[\phi_2,\infty)}$ is radially decreasing.

\smallskip

\emph{Step 7.} (The box dimension is invariant with respect to taking $\tau_0$ and $\phi_0$ sufficiently large.) First, we define $\tau_2$ to be such that $\f(\tau_2)=\phi_2$. Notice that $\tau_2$ is well-defined and $\tau_2\geq \tau_1$. As $p(t)>0$, from (\ref{rkvp}) and the definition of $x(t)$ and $\dot{x}(t)$ it follows that $r(t)>0$, that is, $r(t)$ is strictly positive function. That means there exists constant $m_1>0$ such that for all $t\in[\tau_0,\tau_2]$ it holds
\begin{equation}\label{tvrdnja_vece}
r(t)>m_1 .
\end{equation}
Notice that $\phi_2\geq\phi_1\geq\phi_0$. From (\ref{fkaotkaofi}) it follows that $r(t)\to 0$ as $t\to\infty$ so there exists $\tau_3\geq\tau_2$ such that for all $t\in[\tau_3,\infty)$ it holds
\begin{equation}\label{tvrdnja_manje}
r(t)<m_1 .
\end{equation}
We define $\phi_3:=\f(\tau_3)$. Notice that we could increase $\tau_3$ and $\phi_3$ to accommodate all requirements in different parts of the proof on $t$ or $\f$ being sufficiently large. Now, from (\ref{tvrdnja_vece}) and (\ref{tvrdnja_manje}) we conclude that
\begin{equation}\label{komad_jedan}
\Gamma'|_{[\tau_0,\tau_2]}\bigcap\Gamma'|_{(\tau_3,\infty)}=\emptyset .
\end{equation}
As $f|_{[\phi_2,\infty)}$ is radially decreasing and $\f'(t)>0$ for all $t\in[\tau_2,\infty)$ it follows that $\Gamma'|_{(\tau_2,\infty)}$ does not have self intersections, so
\begin{equation}\label{komad_dva}
\Gamma'|_{[\tau_2,\tau_3]}\bigcap\Gamma'|_{(\tau_3,\infty)}=\emptyset .
\end{equation}
Finally, from (\ref{komad_jedan}) and (\ref{komad_dva}) we have $\Gamma'|_{[\tau_0,\tau_3]}\bigcap\Gamma'|_{(\tau_3,\infty)}=\emptyset$. Now, we can apply Lemma \ref{novalemazuzu} on curve $\Gamma'$.

Using Lemma \ref{novalemazuzu} we see that without loss of generality we can assume that $\tau_0$ and $\phi_0$, in the assumptions of the theorem, are sufficiently large. Informally, we can always remove any rectifiable part from the beginning of $\Gamma'$, without changing the box dimension of $\Gamma'$.

\smallskip

\emph{Step 8.} (Checking waviness condition (\ref{def_SRC}).) By factoring (\ref{drpodtp}), we get
\begin{equation}\label{drpodtfactp}
\frac{dr}{dt}(t)=\left(1+\frac{p'(t)}{p(t)}\tan t\right)\left(1+\frac{p''(t)}{2p'(t)}\tan t\right)\frac{2p(t)p'(t)}{r(t)}\cos^2 t ,
\end{equation}
for every $t\neq\frac{\pi}{2}+k\pi$, $k\in\Ze$ ($\cos t\neq 0$).

By Lemma \ref{unique1} and Remark \ref{remark-za-drpodt}, see below, and using (\ref{pnej}) and (\ref{psup}), there exists $k_0\in\eN_0$ such that the equations
$$
       \tan t=-\frac{p(t)}{p'(t)}, \quad \quad \tan t =-\frac{2p'(t)}{p''(t)},
$$
have unique solutions $\hat{t}_{2k}$ and $t_{2k-1}$, respectively, in intervals $((k+k_0)\pi-\pi,(k+k_0)\pi-\frac{\pi}{2})$, for each $k\in\eN_0$, since
$$
-\frac{p(t)}{p'(t)}\sim \frac{t}{\alpha}, \qquad -\frac{2p'(t)}{p''(t)}\sim \frac{2t}{\alpha+1} \qquad\mathrm{as}\ t\to\infty .
$$
Moreover, by taking $k_0$ to be sufficiently large, from (\ref{pnej}) and using inequalities $1<2/(\alpha+1)<1/\alpha$, we see that $\hat{t}_{2k}$ and $t_{2k-1}$ even lie in the smaller intervals
\begin{equation} \label{interval_za_poz_t}
((k+k_0)\pi-\frac{\pi}{2}-\frac{\pi}{3},(k+k_0)\pi-\frac{\pi}{2}) ,
\end{equation}
for each $k\in\eN_0$. (The statement is true for interval of any length provided upper bound is $(k+k_0)\pi-\frac{\pi}{2}$. We choose value $\pi/3$, because it is convenient later in the proof.)

Notice that because of $\frac{1}{\alpha}\neq\frac{2}{\alpha+1}$ we see that $-\frac{p(t)}{p'(t)}\neq -\frac{2p'(t)}{p''(t)}$ for $t$ sufficiently large, so $\hat{t}_{2k}\neq t_{2k-1}$ for $k_0$ sufficiently large. Without loss of generality we can take $t_{2k-1}<\hat{t}_{2k}$. So $\hat{t}_{2k}-t_{2k-1}<\pi/3$ for every $k\in\eN$, provided $k_0$ is sufficiently large. It is easy to see from (\ref{drpodtfactp}) that $\frac{dr}{dt}(t)$ is positive between these solutions.

As $\frac{d\f}{dt}(t)>0$ for all $t$ sufficiently large, from $\frac{dr}{dt}(t)=f'(\f)\cdot\frac{d\f}{dt}(t)$ it follows that $f'(\f)>0$ on set $\bigcup_{k=1}^{\infty}(\f_{2k-1},\hat{\f}_{2k})$ where $\f_{2k-1}:=\f(t_{2k-1})$ and $\hat{\f}_{2k}:=\f(\hat{t}_{2k})$.
This implies that function $f(\f)$ is increasing for some $\f$, so we can not apply Theorem \ref{novizuzu} directly.

Notice that if $t\in\bigcup_{k=0}^{\infty}(t_{2k-1},\hat{t}_{2k})$ then $r'(t)>0$ and if $t\in\bigcup_{k=0}^{\infty}(\hat{t}_{2k},t_{2k+1})$ then $r'(t)<0$.

We would like to prove that for every $k\in\eN_0$ there exists unique $t_{2k}\in(\hat{t}_{2k},t_{2k+1})$ such that $r(t_{2k})=r(t_{2k-1})$ and $t_{2k}-t_{2k-1}< \pi/3$ (where we will take $k_0$ from (\ref{interval_za_poz_t}) to be sufficiently large). As $r(\hat{t}_{2k})>r(t_{2k-1})$, and as function $r(t)$ is continuous and strictly decreasing on interval $(\hat{t}_{2k},t_{2k+1})$, it follows that, if such $t_{2k}$ exists then it is necessary unique, so we only need to prove the existence.

For every $k\in\eN_0$ we take $\bar{t}_{2k}:=t_{2k-1}+\pi/3$. Notice that $\bar{t}_{2k}\in(\hat{t}_{2k},t_{2k+1})$, because from (\ref{interval_za_poz_t}) follows that $t_{2k+1}-t_{2k-1}>2\pi/3$ and $\hat{t}_{2k}-t_{2k-1}<\pi/3$. Define $\bar{\f}_{2k}:=\f(\bar{t}_{2k})$ and take $\f_{2k-1}$ as defined before. Using (\ref{fiprekot}), we can take $t$ or equivalently $k_0$ sufficiently large, such that $(\pi/3+1)/2\leq\bar{\f}_{2k}-\f_{2k-1}\leq 2$ for every $k\in\eN_0$. (The exact value of the upper bound is not important. We just take some value larger than $\pi/3$. For lower bound, it is only important that it is larger than $1$ and lower than $\pi/3$, so we take the mean value between these two.)

Now, using Lemma~\ref{tehnicka2}, analogously as in \emph{Step 5}, we compute
\begin{eqnarray}
r(t_{2k-1})-r(\bar{t}_{2k}) & = & r(t(\f_{2k-1}))-r(t(\bar{\f}_{2k})) \nonumber\\
& = & r(t(\f_{2k-1}))-r(t(\f_{2k-1}+(\bar{\f}_{2k}-\f_{2k-1}))) \nonumber\\
& \geq & C_2 \f_{2k-1}^{-\alpha-1} > 0 , \nonumber
\end{eqnarray}
for some $C_2>0$, provided $\f$ or equivalently $k_0$ is sufficiently large. From this follows $r(\bar{t}_{2k})<r(t_{2k-1})$, and as function $r(t)$ is of class $C^1$, strictly decreasing on interval $(\hat{t}_{2k},\bar{t}_{2k})$ and $r(\hat{t}_{2k})>r(t_{2k-1})$, we see that there exist $t_{2k}\in(\hat{t}_{2k},\bar{t}_{2k})$ such that $r(t_{2k})=r(t_{2k-1})$ and obviously $t_{2k}-t_{2k-1}< \pi/3$.

Using $t_{2k+1}-t_{2k-1}>2\pi/3$, follows that $t_{2k+1}-t_{2k}>2\pi/3-\pi/3=\pi/3$.

We established that for every $k\in\eN_0$ holds $t_{2k+1}>t_{2k}>t_{2k-1}$. Notice that $r'(t_0)\leq 0$ and that sequence $(t_n)$, $n\in\eN_0$, is the same as the sequence from Definition \ref{def_wavy_function}, defined for function $r(t)$.

As $t_{2k+1}-t_{2k-1}>2\pi/3$ for every $k\in\eN_0$, we conclude that $t_n\to\infty$ as $n\to\infty$, which means that sequence $(t_n)$ satisfies condition (\ref{def_SRC})(i).

As $t_{2k+1}-t_{2k}>\pi/3$ for every $k\in\eN_0$, by taking $\varepsilon=\pi/3$, we see that sequence $(t_n)$ satisfies condition (\ref{def_SRC})(ii).

Using (\ref{drpodtp'}) we conclude that there exist $C_3, C_4\in\eR$, $C_4>C_3>0$, such that
\begin{eqnarray}
\mathop{\mathrm{osc}}\limits_{t\in[t_{2k+1},t_{2k+2}]} r(t) & = & r(\hat{t}_{2k+2}) - r(t_{2k+1}) = \int\limits_{t_{2k+1}}^{\hat{t}_{2k+2}} r'(t)\,dt \nonumber\\
& \leq & \frac{1}{3}\cdot\sup\limits_{t\in[t_{2k+1},\hat{t}_{2k+2}]} r'(t) \leq C_3 t_{2k+1}^{-\alpha-2} \leq C_4\hat{t}_{2k+2}^{-\alpha-2} , \nonumber
\end{eqnarray}
for every $k\in\eN_0$, which means that sequence $(t_n)$ satisfies condition (\ref{def_SRC})(iii).

Finally, we conclude that sequence $(t_n)$ satisfies waviness condition (\ref{def_SRC}), so $r(t)$ is a wavy function and $\Gamma'$ is a wavy spiral near the origin.

\smallskip

\emph{Step 9.} (Final conclusion.) From the previous steps we see directly that all of the assumptions of Theorem \ref{biliplema} are fulfilled. We take $\varepsilon'=(\pi/3+1)/2<\varepsilon$ and $\theta=\min\{\varepsilon',\pi\}=(\pi/3+1)/2$. Using Theorem \ref{biliplema} we prove that
$\dim_B\Gamma'=\frac{2}{1+\alpha}$.

\medskip

(ii) To prove that $\Gamma$ is a wavy spiral near the origin notice that \emph{Steps 1--8} also hold for $\alpha>1$.

To prove the rectifiability for $\alpha>1$, from (\ref{fkaotkaofi}), (\ref{dfi}) and (\ref{drpodtp'}) we have that there exist positive constants $C_5$, $M_1$ and $C_6$ such that for every $t\in[t_0,\infty)$ it holds
$$
r(t)\leq C_5 t^{-\alpha},\quad \f'(t)\leq M_1,\quad |r'(t)|\leq C_6 t^{-\alpha-1} .
$$
Therefore
\begin{eqnarray}
l(\Gamma)& = & l(\Gamma') = \int_{t_0}^{\infty}\sqrt{(r(t)\f'(t))^2+(r'(t))^2}\,dt  \nonumber\\
& \leq & \int_{t_0}^{\infty}\sqrt{ M_1^2 C_5^2 t^{-2\alpha}+C_6^2 t^{-2\alpha-2}}\,dt \leq M_2(t_0)\int_{t_0}^{\infty} |t|^{-\alpha}\,dt<\infty . \nonumber
\end{eqnarray}
\qed
\end{proof}

\section{Chirps generated by spirals}

Now we study some a converse of Theorem \ref{s-c-p}, where we obtain the box dimension of a chirp from the corresponding spiral. We begin with a theorem concerning the box dimension of the graph of a generalized $(\alpha,\beta)$-chirp.

\begin{theorem}\label{BDchirp} {\bf (Box dimension and Minkowski content of the graph of a generalized $(\alpha,\beta)$-chirp)}
Let $y(x)=p(x)S(q(x))$, $x \in I=(0,c],$ $c>0.$
Let the functions $p(x)$, $q(x)$ and $S(t)$ satisfy the following assumptions:
\begin{equation}\label{px}
\mbox{ $p\in C(\bar{I})\cap C^{1}(I)$, $q\in C^{1}(I)$, $S\in C^1(\eR)$},
\end{equation}

The function $S(t)$ is assumed to be a $2T$-periodic real function defined on  $\eR$  such that

\begin{equation}\label{Sx1}
\left\{
\begin{array}{c}
\mbox{$S(a)=S(a+T)=0$ for some $a\in\eR$,}\\
\mbox{$S(t)\neq 0$ for all $t\in (a,a+T)\cup (a+T,a+2T)$,}
\end{array}
\right.
\end{equation}
where $T$ is a positive real number and  $S(t)$ alternately changes sign on intervals $(a+(k-1)T,a+kT)$, $k\in \eN$. Without loss of generality, we take  $a=0$.
Let us suppose that $0<\alpha \leq \beta$ and:
\begin{equation}\label{apsp}
p(x)\simeq_1 x^{\alpha} \quad \mbox{as} \quad x\to 0,
\end{equation}
\begin{equation}\label{qq}
q(x)\simeq_1 x^{-\beta} \quad \mbox{as} \quad x\to 0.
\end{equation}

Then
$y(x)$ is d-dimensional fractal oscillatory near the origin, where $d=2-\frac{\alpha+1}{\beta+1}$.
Moreover, $\dim_B(G(y))=d$ and $G(y)$ is Minkowski nondegenerate.
\end{theorem}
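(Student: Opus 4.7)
The plan is to follow Tricot's classical strategy for chirp graphs (cf.\ \cite{tricot} and \cite{chirp}), adapted to the present generality of the periodic profile $S$ and the asymptotic power-law factors $p$ and $q$. First I would locate the sequence of zeros $\{x_k\}_{k\in\eN}$ of $y$ by solving $q(x_k)=kT$ for $k$ large; these exist and are unique near $0$ since $q\simeq_1 x^{-\beta}$ forces $q$ to be strictly decreasing and to tend to $+\infty$ as $x\to 0$. The asymptotic $q(x)\simeq_1 x^{-\beta}$ together with the mean value theorem yields $x_k\simeq k^{-1/\beta}$ and the consecutive difference $w_k:=x_k-x_{k+1}\simeq k^{-(\beta+1)/\beta}$. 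On the $k$-th interval $I_k:=[x_{k+1},x_k]$ the amplitude $h_k:=\sup_{I_k}|y|$ is controlled by $p(x_k)$: by $2T$-periodicity and continuity of $S$, $\max_{[kT+\delta,(k+1)T-\delta]}|S|$ is a positive constant for each small $\delta$, so $h_k\simeq p(x_k)\simeq x_k^\alpha\simeq k^{-\alpha/\beta}$.

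For each $\e>0$ I would introduce the critical index $k_\e$ by $w_{k_\e}\simeq\e$, i.e.\ $k_\e\simeq\e^{-\beta/(\beta+1)}$ and $x_{k_\e}\simeq\e^{1/(\beta+1)}$, and split $I=(0,c]$ as $(0,x_{k_\e}]\cup[x_{k_\e},c]$. On the \emph{resolved} part $[x_{k_\e},c]$ the humps are wider than $\e$; since $\alpha\le\beta<\beta+1$ each hump is tall-and-narrow, hence its arclength is comparable to $2h_k$ and its $\e$-tube contributes area $\simeq\e h_k$ to $|G(y)_\e|$. Summing,
$$
\sum_{k\le k_\e}\e h_k \simeq \e\sum_{k=1}^{k_\e}k^{-\alpha/\beta} \simeq \e\cdot k_\e^{1-\alpha/\beta} \simeq \e^{(\alpha+1)/(\beta+1)}.
$$
On the \emph{unresolved} part $(0,x_{k_\e}]$ the humps are narrower than $\e$, so the $\e$-neighborhood of the graph essentially fills the envelope region $\{(x,y):0<x\le x_{k_\e},\ |y|\le C(p(x)+\e)\}$, whose area is $\simeq \int_0^{x_{k_\e}}(x^\alpha+\e)\,dx\simeq x_{k_\e}^{\alpha+1}\simeq\e^{(\alpha+1)/(\beta+1)}$.

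Adding the two contributions gives $|G(y)_\e|\simeq \e^{(\alpha+1)/(\beta+1)}=\e^{2-d}$ with $d=2-\frac{\alpha+1}{\beta+1}$, which immediately yields $\dim_B G(y)=d$ together with the two-sided Minkowski bound $0<\M_*^d(G(y))\le\M^{*d}(G(y))<\infty$. The $d$-dimensional fractal oscillatority of $y$ then follows directly from the definition, since $y$ is oscillatory near $0$ thanks to the alternating-sign property of $S$ on $(kT,(k+1)T)$ combined with $q(I)\supset[k_0T,\infty)$ for some $k_0\in\eN$.

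The main obstacle is carrying out each of the two-sided estimates with uniform-in-$k$ constants, starting only from the weak assumptions $p\simeq_1 x^\alpha$, $q\simeq_1 x^{-\beta}$ and $S$ merely $C^1$, $2T$-periodic, alternating sign with zeros at $kT$. The lower bound on the unresolved part is the most delicate step: one must show that the envelope of $y$ near $0$ is essentially saturated, i.e.\ that on each $I_k$ the function $|y|$ attains a value comparable to $p(x_k)$. This reduces, via periodicity, to the uniform bound $\max_{[\delta,T-\delta]}|S|>0$ for all small $\delta$, and to a Harnack-type comparison $p(\xi)\simeq p(x_k)$ on $\xi\in I_k$ coming from $p\simeq_1 x^\alpha$ and the smallness of $w_k/x_k$. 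A convenient way to package both bounds at once is to invoke Tricot's oscillation formula $|G(y)_\e|\simeq \int_0^c \mathrm{osc}_{[x-\e,x+\e]\cap I}(y)\,dx+c\e$, and reduce the calculation to estimating this oscillation integral piecewise on the $I_k$.
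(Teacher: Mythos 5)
Your proposal is correct in substance but takes a genuinely different route from the paper. You compute the two--sided asymptotics of $|G(y)_\e|$ directly in Tricot's style: zeros $x_k$ with $q(x_k)=kT$, widths $x_k-x_{k+1}\simeq k^{-(\beta+1)/\beta}$, amplitudes $\simeq k^{-\alpha/\beta}$, a critical index $k_\e\simeq\e^{-\beta/(\beta+1)}$, and a resolved/unresolved split, with the oscillation integral $\int_0^c\mathrm{osc}_{[x-\e,x+\e]\cap I}(y)\,dx$ as the rigorous vehicle. The paper instead reduces the whole statement to the ready-made criterion of Theorem \ref{mersat} (Theorem 2.1 of \cite{mersat}): it verifies the lower-bound inequality (\ref{left}), whose left-hand side $\sum_{n\ge k(\e)}\max_{[a_{n+1},a_n]}|y|\,(a_n-a_{n+1})$ is precisely your envelope area over the unresolved region, and the upper-bound inequality (\ref{right}), where the resolved part is controlled not hump-by-hump but globally through $\e\int|y'(x)|\,dx$ with $|y'(x)|\le c\,x^{\alpha-\beta-1}$; the facts about $p$, $q$, $q^{-1}$ and the zeros $a_k=q^{-1}(kT)$ that you derive ad hoc are packaged in Propositions \ref{druga} and \ref{treca}. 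The paper's route buys brevity and a clean separation between the arithmetic of the chirp and the geometric measure theory, which is delegated to Lemmas 2.1 and 2.2 of \cite{mersat}; your route buys self-containedness and an explicit asymptotic $|G(y)_\e|\simeq\e^{2-d}$ from which nondegeneracy is immediate. The uniformity issues you flag at the end (overlap of adjacent $\e$-tubes, saturation of the envelope) are real, and your proposed reduction to the oscillation integral is exactly how the cited lemmas of \cite{mersat} dispose of them, so the plan closes.

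One concrete caveat, which you should make explicit: your step $\sum_{k\le k_\e}k^{-\alpha/\beta}\simeq k_\e^{1-\alpha/\beta}$ fails at the endpoint $\alpha=\beta$, where the sum is $\simeq\log k_\e$ and the resolved part contributes $\simeq\e\log(1/\e)$, so that $\M^{*1}(G(y))=\infty$ and the nondegeneracy assertion breaks down. This is not a defect of your method relative to the paper: the paper's own estimate $\e\int_{a_{k(\e)}}^{a_{k_0}}|y'(x)|\,dx\le\e\,[\,a_{k_0}^{\alpha-\beta}+a_{k(\e)}^{\alpha-\beta}\,]$ has the same logarithmic leak when $\alpha=\beta$. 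For $0<\alpha<\beta$ both arguments are sound; the case $\alpha=\beta$ requires either a separate statement or the strict inequality in the hypothesis.
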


Theorem \ref{BDchirp} is a new version of \cite[Theorems 5 and 6]{lukamaja}. In Theorem~\ref{BDchirp} we do not need any assumptions on the curvature function of $y(x)=p(x)S(q(x))$, as it was needed in \cite{lukamaja}.
Before proving Theorem \ref{BDchirp}, we shall cite a new criterion for fractal oscillations of a bounded continuous function and after that we continue with two propositions concerning the properties of functions $p, q$ and $S$.

\begin{theorem}[Theorem 2.1. from \cite{mersat}]{\label{mersat}} Let $y\in C^1((0,T])$ be a bounded function on $(0,T]$. Let $s\in[1,2)$ be a real number and let $(a_n)$ be a decreasing
sequence of consecutive zeros of $y(x)$ in $(0,T]$ such that $a_n\to 0$ when $n\to \infty$ and let there exist constants $c_1, c_2, \varepsilon_0$ such that for all $\varepsilon \in(0,\varepsilon_0)$ we have:
\begin{equation}\label{left}
c_1\varepsilon^{2-s}\leq \sum_{n\geq k(\varepsilon)} \max_{x\in[a_{n+1},a_n]} |y(x)|(a_n-a_{n+1}),
\end{equation}
\begin{equation}\label{right}
a_{k(\varepsilon)} \sup_{x\in(0,a_{k(\varepsilon)}]} |y(x)| + \varepsilon \int_{a_{k(\varepsilon)}}^{a_1} |y'(x)|dx \leq c_2\varepsilon^{2-s},
\end{equation}
where $k(\varepsilon)$ is an index function on $(0,\varepsilon_0]$ such that
$$
|a_n-a_{n+1}|\leq \varepsilon \quad \mbox{for all} \quad n\geq k(\varepsilon) \quad\mbox{and}\quad \varepsilon \in (0,\varepsilon_0).
$$
Then $y(x)$ is fractal oscillatory near $x=0$ with $\dim_B G(y)=s$.
\end{theorem}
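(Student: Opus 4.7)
The plan is to apply the oscillation criterion Theorem \ref{mersat} directly to $y(x)=p(x)S(q(x))$ with $s=d=2-\frac{\alpha+1}{\beta+1}$. Its conclusion already contains \emph{both} $\dim_B G(y)=d$ and Minkowski nondegeneracy (this is the definition of $d$-dimensional fractal oscillatory), so the whole proof reduces to producing a decreasing sequence $(a_n)$ of consecutive zeros of $y$ and an index function $k(\varepsilon)$ for which the two inequalities (\ref{left}) and (\ref{right}) hold with exponent $2-d=\frac{\alpha+1}{\beta+1}$.

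First I would use (\ref{qq}) together with the $\simeq_1$ bound $|q'(x)|\simeq x^{-\beta-1}$ to make $q$ strictly monotone on some interval $(0,c']$. By the zero structure (\ref{Sx1}), the zeros of $y$ are precisely the solutions of $q(x)=kT$ for large $k$, producing a strictly decreasing sequence $a_n\searrow 0$. Inverting $q(x)\simeq x^{-\beta}$ gives $a_n\simeq n^{-1/\beta}$, and the mean value theorem applied to $q$ on $[a_{n+1},a_n]$ gives $a_n-a_{n+1}\simeq n^{-(\beta+1)/\beta}$. Taking $k(\varepsilon)$ as the smallest $n$ with $a_n-a_{n+1}\le\varepsilon$ yields $k(\varepsilon)\simeq \varepsilon^{-\beta/(\beta+1)}$ and $a_{k(\varepsilon)}\simeq\varepsilon^{1/(\beta+1)}$.

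For (\ref{left}) I would use $p(x)\simeq x^\alpha$ together with the fact that $|S|$ attains a strictly positive maximum on each period to obtain $\max_{[a_{n+1},a_n]}|y(x)|\simeq n^{-\alpha/\beta}$; multiplying by $a_n-a_{n+1}$ and summing gives $\sum_{n\geq k(\varepsilon)} n^{-(\alpha+\beta+1)/\beta}\simeq k(\varepsilon)^{-(\alpha+1)/\beta}\simeq \varepsilon^{(\alpha+1)/(\beta+1)}$. For (\ref{right}) the first summand is $a_{k(\varepsilon)}\sup|y|\leq C\,a_{k(\varepsilon)}^{\alpha+1}\simeq \varepsilon^{(\alpha+1)/(\beta+1)}$. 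For the derivative integral I would expand $y'=p'S(q)+pS'(q)q'$ and, on each interval $[a_{n+1},a_n]$ (one period of $S\circ q$), change variables $u=q(x)$ in the second (dominant) term to obtain a contribution $\simeq p(a_n)\int_0^T|S'(u)|\,du\simeq n^{-\alpha/\beta}$, while the first term telescopes to a bounded total. Summing yields $\int_{a_{k(\varepsilon)}}^{a_1}|y'|\,dx\simeq k(\varepsilon)^{1-\alpha/\beta}$, so $\varepsilon\cdot k(\varepsilon)^{1-\alpha/\beta}\simeq \varepsilon^{\,1-(\beta-\alpha)/(\beta+1)}=\varepsilon^{(\alpha+1)/(\beta+1)}=\varepsilon^{2-d}$, exactly as needed.

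The main obstacle I anticipate is the boundary case $\alpha=\beta$, where $\sum_{n=1}^{k(\varepsilon)} n^{-\alpha/\beta}=\sum n^{-1}\sim\log k(\varepsilon)$ produces a spurious logarithmic factor while simultaneously $\varepsilon^{2-d}=\varepsilon$; this forces a sharper grouping of the derivative integral (say, exploiting the sign of $y'$ within each half-period so that the bound is not merely $p(a_n)\cdot V_S$ but rather its oscillatory cancellations) to kill the log. The two preparatory propositions announced immediately after the theorem are presumably dedicated to packaging exactly the kind of derivative and bounded-variation estimates on $p$, $q$ and $S$ that feed into the computations above, after which the verification of (\ref{left}) and (\ref{right}) becomes the bookkeeping I have sketched.
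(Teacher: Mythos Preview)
There is a basic mismatch you should be aware of. The statement you were given is Theorem~\ref{mersat}, the abstract fractal-oscillation criterion quoted verbatim from \cite{mersat}; the paper does \emph{not} prove it --- it is cited as a black box. Your proposal is not a proof of Theorem~\ref{mersat} at all: you \emph{assume} it and apply it to the particular function $y(x)=p(x)S(q(x))$, which is exactly the content of Theorem~\ref{BDchirp}. So either you have addressed the wrong statement, or the intended target was Theorem~\ref{BDchirp}; I compare against the latter, since that is the only proof in the paper your sketch can be matched to.

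Against the paper's proof of Theorem~\ref{BDchirp}, your architecture is the same: produce the zero sequence $a_n=q^{-1}(nT)$, extract the asymptotics $a_n\simeq n^{-1/\beta}$, $a_n-a_{n+1}\simeq n^{-(\beta+1)/\beta}$, $k(\varepsilon)\simeq\varepsilon^{-\beta/(\beta+1)}$ (this is precisely what Propositions~\ref{druga} and~\ref{treca} package), and verify (\ref{left}) and (\ref{right}) with exponent $(\alpha+1)/(\beta+1)$. The one real difference is your handling of $\int_{a_{k(\varepsilon)}}^{a_1}|y'|\,dx$. You propose a period-by-period decomposition with the substitution $u=q(x)$; the paper instead uses the crude pointwise bound $|y'(x)|\le c\,x^{\alpha-\beta-1}$ and integrates in one stroke to get $\varepsilon\cdot a_{k(\varepsilon)}^{\alpha-\beta}\simeq\varepsilon^{(\alpha+1)/(\beta+1)}$. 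For $\alpha<\beta$ the paper's route is shorter and entirely adequate, so your periodwise machinery is unnecessary there.

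Your worry about $\alpha=\beta$ is legitimate, and in fact the paper's own argument is not clean at that endpoint either: with $\alpha=\beta$ the pointwise bound gives $\int x^{-1}\,dx\sim\log(1/a_{k(\varepsilon)})$, so $\varepsilon\log(1/\varepsilon)$ rather than $O(\varepsilon)$, and the displayed estimate $\varepsilon[a_{k_0}^{\alpha-\beta}+a_{k(\varepsilon)}^{\alpha-\beta}]\le c_2\varepsilon^{(\alpha+1)/(\beta+1)}$ does not follow as written. Your suggested repair via sign cancellation on half-periods is plausible but is only a sketch; the paper simply does not isolate this case.
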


We remark that the claim of Theorem \ref{mersat} is true if we substitute $a_1$, appearing in (\ref{right}) by $a_{k_0}$, where $k_0$ is a fixed natural number.

\begin{proposition}{\label{druga}} Assume that the functions $p(x)$ and  $q(x)$ satisfy conditions {\rm (\ref{px})}, {\rm (\ref{apsp})} and {\rm (\ref{qq})}. Then
there exist $\delta_0 >0$ and positive constants $C_1 \mbox{and} \  C_2$ such that:
               \begin{equation}\label{qq1}
               C_1x^{\alpha}\leq p(x) \leq C_2x^{\alpha}, \  C_1x^{\alpha-1}\leq p'(x) \leq C_2x^{\alpha-1},
               \end{equation}
               \begin{equation}\label{qq2}
                C_1x^{-\beta}\leq q(x) \leq C_2x^{-\beta}, \  C_1x^{-\beta-1}\leq -q'(x) \leq C_2x^{-\beta-1},
               \end{equation}
                for all $x\in(0,\delta_0]$. Furthermore, there exists the inverse function $q^{-1}$ of the function $q$ defined on $[m_0,\infty)$, where $m_0=q(\delta_0)$, and it holds:
                \begin{equation}\label{qq-1}
               q^{-1}(t) \simeq_1 t^{-1/\beta}\quad \mbox{as}\quad t\to\infty,
               \end{equation}
               \begin{equation}\label{qq-1st}
               C_1t^{-\frac{1}{\beta}-1}(t-s)\leq q^{-1}(s)- q^{-1}(t)\leq C_2s^{-\frac{1}{\beta}-1}(t-s), \quad m_0\leq s<t.
               \end{equation}
\end{proposition}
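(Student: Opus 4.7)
The plan is to treat each conclusion as an essentially direct unfolding of the hypotheses $p\simeq_1 x^\alpha$ and $q\simeq_1 x^{-\beta}$, supplemented by one mean-value-theorem step for the incremental bound (\ref{qq-1st}).

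First I would derive (\ref{qq1}) and (\ref{qq2}) simultaneously. By definition of $\simeq_1$, the hypothesis $p\simeq_1 x^\alpha$ unpacks to $p(x)\simeq x^\alpha$ together with $p'(x)\simeq (x^\alpha)'=\alpha x^{\alpha-1}$ as $x\to 0$, which on a sufficiently small interval $(0,\delta_0]$ yields the two pairs of inequalities in (\ref{qq1}) after absorbing the constant $\alpha$ into $C_1,C_2$. Identically, $q\simeq_1 x^{-\beta}$ gives $q(x)\simeq x^{-\beta}$ and $q'(x)\simeq -\beta x^{-\beta-1}$; the latter forces $q'(x)<0$ near the origin, so that $-q'(x)>0$ and the derivative inequality in (\ref{qq2}) reads as stated.

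Next I would justify the existence of $q^{-1}$ and derive (\ref{qq-1}). Strict monotonicity of $q$ on $(0,\delta_0]$ follows from $q'<0$ there; combined with continuity of $q$ and $q(x)\to\infty$ as $x\to 0^+$ (immediate from $q(x)\geq C_1 x^{-\beta}$), $q$ is a $C^1$ diffeomorphism of $(0,\delta_0]$ onto $[m_0,\infty)$ with $m_0:=q(\delta_0)$. Setting $x=q^{-1}(t)$ in $q(x)\simeq x^{-\beta}$ and solving for $x$ gives $q^{-1}(t)\simeq t^{-1/\beta}$ for large $t$. For the derivative part of $\simeq_1$, the inverse-function rule gives $(q^{-1})'(t)=1/q'(q^{-1}(t))$; substituting the just-established asymptotics $q^{-1}(t)\simeq t^{-1/\beta}$ together with $q'(x)\simeq -x^{-\beta-1}$ produces $(q^{-1})'(t)\simeq -t^{-1/\beta-1}$, completing (\ref{qq-1}).

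Finally, (\ref{qq-1st}) follows from the mean value theorem: for $m_0\le s<t$ pick $\xi\in(s,t)$ with $q^{-1}(s)-q^{-1}(t)=-(q^{-1})'(\xi)(t-s)=|(q^{-1})'(\xi)|(t-s)$. Using $|(q^{-1})'(\xi)|\simeq \xi^{-1/\beta-1}$ from the previous step and the fact that $u\mapsto u^{-1/\beta-1}$ is decreasing on $(0,\infty)$, the sandwich $s\le\xi\le t$ gives $t^{-1/\beta-1}\le \xi^{-1/\beta-1}\le s^{-1/\beta-1}$, and absorbing multiplicative constants into $C_1,C_2$ yields (\ref{qq-1st}). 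I do not anticipate any genuine obstacle; the only mild technical point is to choose $\delta_0$ small enough that all four asymptotic pairs $p\simeq x^\alpha$, $p'\simeq x^{\alpha-1}$, $q\simeq x^{-\beta}$, $q'\simeq -x^{-\beta-1}$ hold simultaneously on $(0,\delta_0]$ with a common pair of constants.
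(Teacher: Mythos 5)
Your proposal is correct and follows essentially the same route as the paper's own proof: (\ref{qq1})--(\ref{qq2}) directly from the definition of $\simeq_1$, existence of $q^{-1}$ from monotonicity, (\ref{qq-1}) via the inverse-function derivative formula, and (\ref{qq-1st}) via the mean value theorem combined with (\ref{qq-1}). The only difference is that you spell out the sandwiching of the intermediate point $\xi$ explicitly, which the paper leaves implicit.
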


\begin{proposition}{\label{treca}} For any function $S(t)$ satisfying {\rm (\ref{Sx1})}, and for any function $q(x)$ with properties {\rm (\ref{px})} and {\rm (\ref{qq})}, we have:
\begin{itemize}\item[\rm{(i)}]
$S(kT)=0, \ k\in \eN$.
                \item[\rm{(ii)}]
Let $a_k=q^{-1}(kT)$ and $s_k=q^{-1}(t_0+kT), \ k\in \eN$, where $t_0\in(0,T)$ is arbitrary. Then there exist $k_0\in \eN$ and $c_0>0$ such that $a_k\in(0,\delta_0]\ y(a_k)=0,\
 s_k\in(a_{k+1},a_k)$ for all $k\geq k_0$, $a_k\searrow 0$ as $k\to \infty$, $a_k\simeq k^{-1/\beta}$ as $k\to \infty$, and
\begin{equation}\label{pro4}
 \max_{x\in[a_{k+1},a_k]} |y(x)| \geq c_0(k+1)^{-\alpha/\beta} \quad \mbox{for all $k\geq k_0$, $c_0>0$}.
\end{equation}
                \item[\rm{(iii)}]
There exists $\varepsilon_0>0$ and a function $k:(0,\varepsilon_0)\to\eN$ such that
\begin{equation}\label{kaeps}
\frac{1}{T}\left(\frac{\varepsilon}{TC_2} \right)^{-\frac{\beta}{\beta+1}}\leq k(\varepsilon)\leq \frac{2}{T}\left(\frac{\varepsilon}{TC_2} \right)^{-\frac{\beta}{\beta+1}}.
\end{equation}
In particular,
$$C_1T((k+1)T)^{-\frac{1}{\beta}-1}\leq a_k-a_{k+1}\leq \varepsilon ,$$
for all $k\geq k(\varepsilon)$ and $\varepsilon\in (0,\varepsilon_0)$.
\end{itemize}
\end{proposition}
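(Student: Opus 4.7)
\textbf{Plan for the proof of Proposition \ref{treca}.}

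Part (i) is immediate: since $a=0$, the hypotheses (\ref{Sx1}) give $S(0)=S(T)=0$, and the $2T$-periodicity of $S$ then yields $S(kT)=S(0)=0$ for $k$ even and $S(kT)=S(T)=0$ for $k$ odd. So I would simply state this and move on.

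For part (ii), I would start by invoking Proposition \ref{druga}, which ensures that $q$ is a decreasing homeomorphism of $(0,\delta_0]$ onto $[m_0,\infty)$ (from $q'<0$ in (\ref{qq2})), and that $q^{-1}(t)\simeq_1 t^{-1/\beta}$ as $t\to\infty$. Choose $k_0\in\eN$ with $k_0T\geq m_0$, so that for every $k\geq k_0$ the point $a_k=q^{-1}(kT)$ lies in $(0,\delta_0]$; by part (i), $y(a_k)=p(a_k)S(kT)=0$. The monotonicity of $q^{-1}$ together with the chain $kT<t_0+kT<(k+1)T$ gives $s_k\in(a_{k+1},a_k)$, and $a_k\searrow 0$ follows from $q^{-1}(t)\to 0$ as $t\to\infty$. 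The asymptotics $a_k\simeq k^{-1/\beta}$ and $s_k\simeq k^{-1/\beta}$ are read off from (\ref{qq-1}) applied to $t=kT$ and $t=t_0+kT$ respectively. For the lower bound (\ref{pro4}) I would evaluate $|y|$ at the intermediate point $s_k$, writing
\begin{equation*}
|y(s_k)|=p(s_k)\,|S(t_0+kT)|.
\end{equation*}
By $2T$-periodicity, $|S(t_0+kT)|$ takes only the two values $|S(t_0)|$ and $|S(t_0+T)|$ according to the parity of $k$, and both are strictly positive since $t_0\in(0,T)$ and $t_0+T\in(T,2T)$ lie in the non-vanishing set described by (\ref{Sx1}). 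Combining the uniform lower bound on $|S(t_0+kT)|$ with $p(s_k)\geq C_1 s_k^\alpha\geq c_0'(k+1)^{-\alpha/\beta}$ (from (\ref{apsp}) and the asymptotics of $s_k$) produces a constant $c_0>0$ with $\max_{[a_{k+1},a_k]}|y|\geq|y(s_k)|\geq c_0(k+1)^{-\alpha/\beta}$ for all sufficiently large $k$, after possibly enlarging $k_0$.

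For part (iii) I would apply the two-sided bound (\ref{qq-1st}) with $s=kT$ and $t=(k+1)T$, which reads
\begin{equation*}
C_1T\bigl((k+1)T\bigr)^{-\frac{1}{\beta}-1}\leq a_k-a_{k+1}\leq C_2T(kT)^{-\frac{1}{\beta}-1}.
\end{equation*}
The lower estimate is precisely the one stated in the proposition. The upper estimate $\leq\varepsilon$ is equivalent, after elementary algebra, to $kT\geq(\varepsilon/(TC_2))^{-\beta/(\beta+1)}$. I would therefore define $k(\varepsilon)$ as the smallest integer $\geq T^{-1}(\varepsilon/(TC_2))^{-\beta/(\beta+1)}$; shrinking $\varepsilon_0$ so that this quantity exceeds $k_0$ and so that the ceiling operation costs at most a factor of $2$ gives the required sandwich (\ref{kaeps}).

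The only nontrivial point in the whole argument is the uniform positivity of $|S(t_0+kT)|$ in (ii), which would fail for an arbitrary quasiperiodic $S$ but is salvaged here by $2T$-periodicity reducing the issue to two fixed values; everything else is routine bookkeeping using Proposition \ref{druga}.
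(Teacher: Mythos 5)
Your proposal is correct and follows essentially the same route as the paper: part (i) from periodicity, part (ii) by evaluating $|y|$ at the intermediate points $s_k$ where $|S(q(s_k))|\geq\min\{|S(t_0)|,|S(t_0+T)|\}>0$ and using the bounds from Proposition \ref{druga}, and part (iii) by applying (\ref{qq-1st}) with $s=kT$, $t=(k+1)T$ and choosing $k(\varepsilon)$ as an integer trapped between $c\varepsilon^{-\beta/(\beta+1)}$ and $2c\varepsilon^{-\beta/(\beta+1)}$. The only cosmetic difference is that the paper bounds $p(s_k)\geq p(a_{k+1})$ via monotonicity of $p$ before invoking $p(x)\geq C_1x^\alpha$, whereas you bound $p(s_k)$ directly; both are equivalent.
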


Proofs of Propositions \ref{druga} and \ref{treca} are provided in the Appendix.

\smallskip

\begin{proof}[Theorem \ref{BDchirp}]
First we check inequality (\ref{left}). By Proposition \ref{treca} we have:
\begin{eqnarray}
\sum_{k\geq k(\varepsilon)} \max_{x\in[a_{k+1},a_k]} |y(x)|(a_k-a_{k+1}) & \geq & c\sum_{k=k(\varepsilon)+1}^\infty (k+1)^{-\frac{\alpha+\beta+1}{\beta}} \nonumber\\
& = & c\sum_{k=k(\varepsilon)}^\infty (k)^{-\frac{\alpha+\beta+1}{\beta}}=ca, \nonumber
\end{eqnarray}
where the series $ a=\sum_{k=k(\varepsilon)}^\infty (k)^{-\frac{\alpha+\beta+1}{\beta}}$ is convergent, because of $\frac{\alpha+\beta+1}{\beta}>1$. Then using the inequality
$(\frac{1}{k(\varepsilon)})^{\frac{\alpha+\beta+1}{\beta}-1}<1$ and (\ref{kaeps}) we obtain
\begin{eqnarray}
\sum_{k\geq k(\varepsilon)} \max_{x\in[a_{k+1},a_k]} |y(x)|(a_k-a_{k+1})&\geq& ca\geq ca(\frac{1}{k(\varepsilon)})^{\frac{\alpha+\beta+1}{\beta}-1}\nonumber\\
&\geq& c_1\varepsilon^{\frac{\alpha+1}{\beta+1}}=c_1\varepsilon^{2-(2-\frac{\alpha+1}{\beta+1})},\nonumber
\end{eqnarray}
for all $\varepsilon \in (0,\varepsilon_0)$. By \cite[Lemma 2.1.]{mersat} this implies
$$0<\M_*^d(G(y)) \quad \mbox{and} \quad \underline\dim_BG(y)   \geq d,
$$
where $G(y)$ is the graph of $y$ and $d=2-\frac{\alpha+1}{\beta+1}$. Now we check inequality (\ref{right}). From (\ref{apsp}) and (\ref{qq}) it follows that
$$|y'(x)|=|p'(x)S(q(x))+p(x)q'(x)S'(q(x))|\leq cx^{\alpha-\beta-1},$$
which holds near $x=0$, where $$c=\max\{ \max_{x\in[0,2T]}|S(t)|,\max_{x\in[0,2T]}|S'(t)| \}.$$ By Proposition \ref{treca} we have:
$$
a_{k(\varepsilon)} \sup_{x\in(0,a_{k(\varepsilon)}]} |y(x)| + \varepsilon \int_{a_{k(\varepsilon)}}^{a_{k_0}} |y'(x)|dx \leq c\varepsilon^{\frac{\alpha+1}{\beta+1}}+\varepsilon[a_{k_0}^{\alpha-\beta}+
a_{k(\varepsilon)}^{\alpha-\beta}]\leq c_2\varepsilon^{\frac{\alpha+1}{\beta+1}} ,
$$
for all $\varepsilon \in(0,\varepsilon_0)$. By \cite[Lemma 2.2.]{mersat} it follows that
$$
\M^{*d}(G(y)) <\infty  \quad \mbox{and} \quad \overline{\dim}_BG(y)   \leq d=2-\frac{\alpha+1}{\beta+1}.
$$
Finally, combining the obtained results, we have that the graph $G(y)$ is Minkowski nondegenerate, and
$$ \dim_BG(y)=2-\frac{\alpha+1}{\beta+1}=d.$$
\qed
\end{proof}

Now we can state a spiral-chirp comparison.

\begin{theorem}[Spiral-chirp comparison]{\label{c-s}}
Let $\alpha\in(0,1)$, and assume that  $x:[t_0,\infty)\to\eR$, $t_0>0$, is a function of class $C^2$, such that the planar curve $\Gamma=\{(x(t),\dot x(t)):
t\in[t_0,\infty)\}$
is a spiral $r=f(\f)$, $\f\in(\f_0,\infty)$, $\f_0>0$, in polar coordinates, near the origin, such that $f(\f)\simeq_1 \f^{-\alpha}$,  as $\f\to\infty$,
and $\dot\f(t)\simeq 1$,  as $t \to \infty$,
where $\f(t)$ is a function of class $C^1$ defined by
$\tan \f(t)=\frac{\dot x(t)}{x(t)}$.
Define $X(\tau)=x(1/\tau)$. Then $X=X(\tau)$ is  $(\alpha,1)$-chirp-like function, and
$$
\dim_{osc}(x):=\dim_BG(X)=\frac{3-\alpha}2,
$$
where $G(X)$ is graph of the function $X$. Furthermore, $G(X)$ is Minkowski nondegenerate.
\end{theorem}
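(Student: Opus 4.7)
The plan is to reduce the claim about the oscillatory dimension of $x$ to a direct application of Theorem~\ref{BDchirp} for the generalized $(\alpha,1)$-chirp. First I would fix the polar representation of the phase curve. Since $\Gamma$ is the spiral $r=f(\f)$ with $\tan\f(t)=\dot x(t)/x(t)$, the point $(x(t),\dot x(t))$ has Euclidean norm $r(t)=\sqrt{x(t)^2+\dot x(t)^2}=f(\f(t))$ and polar angle $\f(t)$ (taken as a continuous branch), so that
$$
x(t)=f(\f(t))\cos\f(t).
$$
Setting $\tau=1/t$ and $X(\tau)=x(1/\tau)$, the natural candidates for the amplitude and phase of the chirp representation of $X$ are
$$
P(\tau):=f(\f(1/\tau)),\qquad Q(\tau):=\f(1/\tau),
$$
so that $X(\tau)=P(\tau)\cos Q(\tau)$.

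Next I would verify the asymptotic conditions defining a chirp-like function. From $\dot\f(t)\simeq 1$ as $t\to\infty$ one gets $\f(t)\simeq t$, and hence $Q(\tau)=\f(1/\tau)\simeq 1/\tau=\tau^{-1}$ as $\tau\to 0$. Differentiating, $Q'(\tau)=-\dot\f(1/\tau)/\tau^{2}\simeq -\tau^{-2}$, so $Q\simeq_1\tau^{-1}$. Using $f(\f)\simeq_1\f^{-\alpha}$, we get $P(\tau)=f(Q(\tau))\simeq Q(\tau)^{-\alpha}\simeq\tau^{\alpha}$, and the chain rule gives $P'(\tau)=f'(Q(\tau))Q'(\tau)\simeq\alpha\tau^{\alpha-1}$, so $P\simeq_1\tau^\alpha$. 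This already shows that $X$ is an $(\alpha,1)$-chirp-like function in the sense of Definition~\ref{def_chirp_like_function}.

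Finally I would invoke Theorem~\ref{BDchirp} with $p=P$, $q=Q$, $S=\cos$, period $T=\pi$ (zeros at $\pi/2$ and $3\pi/2$), and exponents $\alpha$ and $\beta=1$. All assumptions (\ref{px})--(\ref{qq}) have been verified in the previous step; positivity and $C^1$-smoothness of $P$ and $Q$ are inherited from those of $f$ and $\f$, and $P$ extends continuously by $P(0)=0$. The conclusion of that theorem yields
$$
\dim_B G(X)=2-\frac{\alpha+1}{\beta+1}=\frac{3-\alpha}{2},
$$
together with Minkowski nondegeneracy of $G(X)$.

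The only real obstacle is the initial bookkeeping step: producing a continuous branch $\f(t)$ that is simultaneously compatible with the polar representation of $\Gamma$ and with the identity $\tan\f(t)=\dot x(t)/x(t)$, handling the ambiguity modulo $\pi$ and a possible overall sign in $x(t)=\pm f(\f(t))\cos\f(t)$. This is settled once and for all by observing that any such sign or $\pi$-shift only replaces $\cos Q$ by $\pm\sin Q$ or $\pm\cos Q$, which still falls under Definition~\ref{def_chirp_like_function} and does not affect the conclusion of Theorem~\ref{BDchirp}. After this, the remainder of the argument is just the chain-rule computation above followed by the appeal to Theorem~\ref{BDchirp}.
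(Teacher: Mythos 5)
Your proposal is correct and follows essentially the same route as the paper: write $X(\tau)=P(\tau)\cos Q(\tau)$ with $P(\tau)=f(\f(1/\tau))$, $Q(\tau)=\f(1/\tau)$, verify $P\simeq_1\tau^{\alpha}$ and $Q\simeq_1\tau^{-1}$ from $f(\f)\simeq_1\f^{-\alpha}$ and $\dot\f\simeq1$, and then apply Theorem~\ref{BDchirp} with $S=\cos$ and $\beta=1$. The paper's proof is just a terser version of the same argument (it also records $p(0)=0$ and the monotonicity of $p$ and $q$ near the origin, which you implicitly cover).
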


\begin{proof}
Let us write the function $X(\tau)$ in the form
$$
X(\tau)=p(\tau)\cos q(\tau),\quad \tau\in(0,\frac{1}{t_0}],
$$
where
$$ p(\tau)=f(\f(\frac{1}{\tau})),  \quad q(\tau)=\f(\frac{1}{\tau}).$$
The function $p(\tau)$ is increasing near $\tau=0$ since $\frac{1}{\tau}$ is decreasing, $\f(t)$ is increasing and $f(\f)$ is decreasing near $\f=\infty$. Furthermore, $p\in C([0,1/t_0])$ since $p(0)=\lim_{\tau \to 0}f(\f(1/\tau))=0$,
by noting that $\dot \f \simeq  1$ implies $\f(t) \to \infty$ as $t\to \infty$. Now, we shall exploit Theorem \ref{BDchirp}, by checking that its assumptions are satisfied with $S(q)=\cos q$ and $\beta=1$.
The functions $\f$, $ p$ and $q$ have the following properties:
$$
\f(t)\simeq t \quad\textrm{as}\quad t\to\infty \quad \textrm{or}\quad \f(\frac{1}{\tau})\simeq \frac{1}{\tau} \quad \textrm{as}\quad \tau \to 0 ,
$$
\begin{eqnarray}
p(\tau) & \simeq_1 & \tau^\alpha \quad \textrm{as} \quad \tau\to 0, \nonumber\\
q(\tau) & \simeq_1 & \frac{1}{\tau}\quad \textrm{as}\quad \tau\to 0, \nonumber\\
q^{-1}(t) & \simeq & \frac{1}{t}\quad \textrm{as}\quad t\to\infty. \nonumber
\end{eqnarray}
The function $q$ is decreasing near the origin, thus $q^{-1}$ exists for $t$ large enough. We see that all conditions of Theorem \ref{BDchirp} are fulfilled.
\qed
\end{proof}

\begin{remark}
Theorem \ref{c-s} is a new version of \cite[Theorem 4]{chirp}.
If we compare Theorems \ref{s-c-p} and \ref{c-s} in terms of conditions, then we see that Theorem \ref{c-s} requires derivatives of lower order than Theorem \ref{s-c-p}. Phase plane already gives us the information about the first derivative.
\end{remark}

The following result shows that rectifiable spirals generate rectifiable chirp-like functions.

\begin{theorem}{\label{eqcs1}} {\bf (Rectifiability of a chirp generated by a rectifiable spiral)}
Let $\alpha >1$, and assume that  $x:[t_0,\infty)\to\eR$, $t_0>0$, is a function of class $C^2$ such that the planar curve $\Gamma=\{(x(t),\dot x(t)):
t\in[t_0,\infty)\}$
is a rectifiable spiral $r=f(\f)$, $\f\in(\f_0,\infty)$, $\f_0>0$ in polar coordinates, near the origin, such that $f(\f)\simeq_1 \f^{-\alpha}$, as $\f\to\infty$, $|f''(\f)|\le C\f^{-\alpha-2}$ and $\dot\f(t)\simeq1$ as $t\to\infty$, where $\f(t)$ is a function of class $C^1$ defined by
$\tan \f(t)=\frac{\dot x(t)}{x(t)}$.
Define $X(\tau)=x(1/\tau)$. Then $X=X(\tau)$ is $(\alpha,1)$-chirp-like rectifiable function near the origin.
\end{theorem}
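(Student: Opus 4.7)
\textbf{Plan for the proof of Theorem \ref{eqcs1}.} The natural strategy is to mirror the proof of Theorem \ref{c-s}: decompose $X$ in polar form using the spiral structure of $\Gamma$, identify the amplitude and phase $P(\tau)$ and $Q(\tau)$, and then bound the length of the graph of $X$ directly via change of variables. Since $\Gamma=\{(x(t),\dot x(t))\}$ and $\tan\f=\dot x/x$ with $r=\sqrt{x^2+\dot x^2}=f(\f)$, we have $x(t)=f(\f(t))\cos\f(t)$ and $\dot x(t)=f(\f(t))\sin\f(t)$. Consequently,
$$
X(\tau)=x(1/\tau)=P(\tau)\cos Q(\tau),\qquad P(\tau):=f(\f(1/\tau)),\ Q(\tau):=\f(1/\tau).
$$

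\emph{Step 1 (chirp-like structure).} From $\dot\f(t)\simeq 1$ we integrate to obtain $\f(t)\simeq t$ as $t\to\infty$, so $Q(\tau)=\f(1/\tau)\simeq \tau^{-1}$ as $\tau\to 0$. Differentiating, $Q'(\tau)=-\dot\f(1/\tau)\cdot\tau^{-2}$, hence $|Q'(\tau)|\simeq\tau^{-2}$, which together with the previous estimate yields $Q\simeq_1\tau^{-1}$. Combining $\f(1/\tau)\simeq\tau^{-1}$ with the assumption $f(\f)\simeq\f^{-\alpha}$ gives $P(\tau)\simeq\tau^{\alpha}$. Thus $X$ is an $(\alpha,1)$-chirp-like function near the origin in the sense of Definition \ref{def_chirp_like_function}.

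\emph{Step 2 (rectifiability).} Using the substitution $\tau=1/t$ (so $d\tau=-dt/t^2$, $X'(\tau)=-t^2\dot x(t)$), the length of the graph of $X$ on $(0,1/t_0]$ transforms as
$$
L(G(X))=\int_0^{1/t_0}\sqrt{1+X'(\tau)^2}\,d\tau
       =\int_{t_0}^{\infty}\sqrt{t^{-4}+\dot x(t)^2}\,dt
       \le\int_{t_0}^{\infty}\bigl(t^{-2}+|\dot x(t)|\bigr)\,dt.
$$
The first term is integrable on $[t_0,\infty)$. For the second, $|\dot x(t)|=f(\f(t))|\sin\f(t)|\le f(\f(t))\le C\,\f(t)^{-\alpha}\le C'\,t^{-\alpha}$ using $f\simeq_1\f^{-\alpha}$ and $\f(t)\simeq t$. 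Because $\alpha>1$, $\int_{t_0}^\infty t^{-\alpha}\,dt<\infty$, and therefore $L(G(X))<\infty$, i.e.\ $X$ is rectifiable near $\tau=0$.

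\emph{Main obstacle.} There is no genuinely hard step here: once the polar representation of $X$ is written down, Step 1 is immediate from $\dot\f\simeq 1$ and $f\simeq_1\f^{-\alpha}$, and the rectifiability estimate in Step 2 reduces to the integrability of $t^{-\alpha}$ for $\alpha>1$. The only mild care required is in the change of variables $\tau\leftrightarrow 1/t$, where one must keep track of the Jacobian so that the worst-case bound $|X'(\tau)|\lesssim \tau^{\alpha-2}$ (arising from the $P\cdot Q'$ term) is integrated not directly but via the equivalent integral $\int t^{-\alpha}\,dt$; the hypotheses $|f''(\f)|\le C\f^{-\alpha-2}$ and $\dot\f\simeq 1$ are not needed for the length bound but ensure that $X$ is $C^1$ so that the length formula is justified. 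Notice also that the condition $|f''|\le C\f^{-\alpha-2}$, together with the other hypotheses, is already enough to conclude rectifiability of $\Gamma$ itself, which is consistent with the fact that the rectifiability of $X$ is inherited from that of~$\Gamma$.
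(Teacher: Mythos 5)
Your proof is correct, but the rectifiability argument takes a genuinely different --- and more elementary --- route than the paper's. Step 1 (writing $X=P\cos Q$ with $P(\tau)=f(\f(1/\tau))\simeq\tau^{\alpha}$ and $Q(\tau)=\f(1/\tau)\simeq_1\tau^{-1}$) coincides with the paper's argument. For rectifiability, however, the paper does not integrate the arc-length element: it applies Lemma \ref{unique} to $F(t)=f(t)/f'(t)\simeq -t$ with $\inf F'>-\infty$ --- which is exactly where the hypothesis $|f''(\f)|\le C\f^{-\alpha-2}$ enters --- to show that $X$ has a unique local extremum $s_k$ in each interval $(a_{k+1},a_k)$ between consecutive zeros, and then invokes a cited lemma bounding $\mathrm{length}(G(X))\le 2\sum_k|X(s_k)|+1/t_0$, with $\sum_k|X(s_k)|\le C_1\sum_k k^{-\alpha}<\infty$ for $\alpha>1$. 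Your direct computation $L(G(X))=\int_{t_0}^{\infty}\sqrt{t^{-4}+\dot x(t)^2}\,dt$ combined with the trivial bound $|\dot x(t)|\le\sqrt{x(t)^2+\dot x(t)^2}=f(\f(t))\le C\f(t)^{-\alpha}\le C't^{-\alpha}$ is cleaner, bypasses the extrema analysis entirely, and in fact shows that the hypothesis on $f''$ is superfluous for this theorem (it matters only for the extrema-counting route). One small correction to your closing remark: the assumption $\dot\f(t)\simeq1$ \emph{is} used in your length bound --- it is what yields $\f(t)\simeq t$ and hence the decay $f(\f(t))\le C't^{-\alpha}$ --- so only the $f''$ condition is genuinely unused in your argument.
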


To prove the theorem we shall use the following two lemmas.
\begin{lemma}\label{unique0}
Let $F,  G\in C^1(I)$, where $I$ is an open interval in $\eR$, and assume that $\inf F'>\sup G'$. Then the equation $F(z)= G(z)$
has at most one solution.
\end{lemma}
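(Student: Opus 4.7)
The plan is to reduce the statement to the strict monotonicity of the auxiliary function $H(z) := F(z) - G(z)$ on $I$. Since $F, G \in C^1(I)$, we have $H \in C^1(I)$ with $H'(z) = F'(z) - G'(z)$.

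First, I would use the hypothesis to get a uniform positive lower bound on $H'$. Setting $\delta := \inf_{z \in I} F'(z) - \sup_{z \in I} G'(z)$, the assumption $\inf F' > \sup G'$ gives $\delta > 0$. From the pointwise bounds $F'(z) \geq \inf F'$ and $G'(z) \leq \sup G'$, valid for every $z \in I$, it follows that $H'(z) \geq \delta > 0$ on all of $I$.

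Next, since $I$ is an interval (hence connected) and $H'$ is strictly positive on $I$, the mean value theorem implies that $H$ is strictly increasing on $I$: for any $z_1 < z_2$ in $I$ there is $\xi \in (z_1, z_2)$ with $H(z_2) - H(z_1) = H'(\xi)(z_2 - z_1) \geq \delta(z_2 - z_1) > 0$. In particular, $H$ is injective, so $H(z) = 0$ has at most one solution in $I$, which is exactly the statement that $F(z) = G(z)$ has at most one solution.

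There is essentially no obstacle here; the only mildly subtle point is that $\inf F'$ and $\sup G'$ need not be attained, but since they bound $F'$ and $G'$ pointwise on $I$, the strict inequality between the infimum and the supremum is enough to secure $H' > 0$ uniformly and thus the strict monotonicity of $H$.
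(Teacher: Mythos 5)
Your proof is correct and rests on the same idea as the paper's: the mean value theorem combined with the gap between $\inf F'$ and $\sup G'$. The paper phrases it as a contradiction (two solutions would force $F'(\tilde z_1)=G'(\tilde z_2)$ for some points, hence $\inf F'\le\sup G'$), while you establish strict monotonicity of $F-G$ directly; the difference is cosmetic.
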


\begin{proof}
Suppose that there are two different solutions $z_1$ and $z_2$. Then applying
the mean-value theorem to $F(z_1)-F(z_2)=G(z_1)- G(z_2),$ we obtain that there exist $\tilde z_1$ and $\tilde z_2$ such that $F'(\tilde z_1)= G'(\tilde z_2)$. Therefore, $\inf F'\le\sup G'$. This contradicts the condition $\inf F'>\sup G'$.
\qed
\end{proof}

\begin{lemma}\label{unique}
Let $F\in C^1(0,\infty)$ be such that $F(z)\sim az$ as $z\to\infty$ for some $a<0$. Assume that $\inf F'>-\infty$.
Then there exists a nonnegative integer $k_0$ such that for each $k\ge k_0$ the equation $\cot z=F(z)$ possesses the unique solution in
$J_k=(k\pi,(k+1)\pi)$.
\end{lemma}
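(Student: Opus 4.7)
The plan is to separate the existence and uniqueness statements on each $J_k$. Existence is immediate from the intermediate value theorem: $\cot z$ is continuous and strictly decreasing from $+\infty$ to $-\infty$ on $J_k$, while $F$ is continuous and finite on $\overline{J_k}$, so $\cot z - F(z)$ changes sign and therefore vanishes somewhere in $J_k$.

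For uniqueness the natural move is to rewrite the equation as $-\cot z = -F(z)$ and apply Lemma \ref{unique0} with $\tilde F(z) := -\cot z$ (so $\tilde F'(z) = \csc^2 z$) and $\tilde G(z) := -F(z)$ (so $\sup \tilde G' \le M := -\inf F' < \infty$). The obstruction is that on all of $J_k$ the function $\csc^2 z$ attains its minimum $1$ at $k\pi + \pi/2$, which is in general smaller than $M$, so the hypothesis $\inf \tilde F' > \sup \tilde G'$ of Lemma \ref{unique0} fails on $J_k$ for every $k$. The key step is therefore to localize: any solution of $\cot z = F(z)$ must lie in a small subinterval close to $(k+1)\pi$, on which $\csc^2 z$ is arbitrarily large.

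To effect this localization I would exploit $F(z) \sim az$ with $a < 0$: for $\varepsilon \in (0,1)$ fixed and $k$ sufficiently large, $F(z) \le a(1-\varepsilon)k\pi =: -C_k$ for all $z \in J_k$, with $C_k \to \infty$. Since $\cot z$ is strictly decreasing on $J_k$, the condition $\cot z = F(z) \le -C_k$ forces $z \ge z_k^\ast := (k+1)\pi - \arctan(1/C_k)$. On the subinterval $I_k := [z_k^\ast, (k+1)\pi)$ an elementary calculation gives $|\sin z| \le 1/C_k$, hence $\csc^2 z \ge C_k^2$. Choosing $k_0$ large enough that $C_k^2 > M$ for all $k \ge k_0$, Lemma \ref{unique0} applies on $I_k$ and yields at most one solution there; combined with the localization, this gives uniqueness in all of $J_k$.

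The main obstacle, as indicated, is the realization that a direct application of Lemma \ref{unique0} to $J_k$ cannot succeed: no matter how large $k$ is, $\csc^2 z$ is as small as $1$ somewhere in $J_k$, while $M$ is only assumed finite. The resolution is to notice that the assumption $F(z) \sim az$ with $a < 0$ forces $|F|$ to be large throughout $J_k$ for large $k$, so all solutions must in fact accumulate at the pole of $\cot$, and there $\csc^2 z$ beats any prescribed finite $M$. Once this localization to a shrinking neighborhood of $(k+1)\pi$ is in hand, the remaining monotonicity estimates are routine.
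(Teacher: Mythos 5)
Your proof is correct and follows essentially the same route as the paper: both arguments first localize every solution of $\cot z=F(z)$ to a subinterval of $J_k$ abutting $(k+1)\pi$, on which $\csc^2 z=|\cot' z|$ exceeds the finite bound coming from $\inf F'$, and then invoke Lemma~\ref{unique0} there. The only cosmetic difference is that the paper works on a subinterval of fixed length $\pi-s_0$ (with $s_0$ chosen once from $\inf F'$) while your subintervals shrink with $k$; both choices are valid.
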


\begin{proof}
Since $F(z)$ is continuous and $F(z)\sim az$ as $z\to\infty$, and $\cot z$ restricted to $J_k$ is  continuous function onto $\eR$, it follows that the equation $\cot z=F(z)$ possesses at least one solution $z_k$ on each interval $J_k$. We have to show that the solution is unique on each $J_k$ for all $k$ large enough.

Since $m=\inf F'>-\infty$, there exists $s_0\in(\pi/2,\pi)$ sufficiently close to $\pi$ such that $\cot'(s_0)=-(\sin s_0)^{-2}<m$.
The condition $F(z)\sim az$ implies that, given any fixed $b\in(a,0)$, there exists $M=M(b)>0$ such that $F(z)< bz$
for all $z\ge M$. Let us fix any such $b$.

Let $k_0$ be a nonnegative integer such that $b(k_0\pi)<\cot s_0$. It suffices to take $k_0 > (b\pi)^{-1} \cot s_0$. Taking $k_0$ still larger, we can achieve that $k_0\pi\ge M$. Hence, for $z\ge k_0\pi$ we have $F(z)< bz$. In particular,
$$
F(z)< bz\le b(k_0\pi)<\cot s_0.
$$
Since for $z\ge k_0\pi$ we have $F(z)<\cot s_0$, while $\cot z\ge \cot s_0$ for each $z\in J_k\setminus I_k$,
where $I_k=(k\pi+s_0,(k+1)\pi)$, then  all solutions of equation
$F(z)=\cot z$ for $z\ge k_0\pi$ are contained in
$\cup_{k\ge k_0}I_k$.

Let us define $ G(z)=\cot z$, and consider the equation $F(z)= G(z)$ on $I_k$ for any $k\ge k_0$.
We have
$$
\sup_{I_k} G'=\cot'(k_0\pi+s_0)=-(\sin s_0)^{-2}<\inf_{(0,\infty)}F'\le \inf_{I_k}F'.
$$
The unique solvability of $F(z)= G(z)$ on $I_k$ follows from Lemma~\ref{unique0}.
The equation is uniquely solvable on $J_k$ as well, since there are no solutions in $J_k\setminus I_k$.
\qed
\end{proof}

\begin{remark}\label{remark-dokaz-sp-ch}
The condition $F(z)\sim az$ as $z\to\infty$ in Lemma~\ref{unique} can be weakened. It suffices to assume that $F(z)<bz$ for some $b<0$ and for all $z$ large enough.
\end{remark}

\begin{remark}
The condition $\inf F'>-\infty$ in Lemma~\ref{unique} cannot be dropped. To see this, we
construct a function $y=F(z)$ by means of a sequence of lines $y=b_nz$, where $a<b_n<0$
and $b_n\to a$ as $n\to\infty$. We first construct a continuous function $F_0$ such that on $J_k'=(k\pi,(k+1)\pi]$,
$$
F_0(z)=
\left\{
\begin{array}{ll}
b_kz,& \mbox{for $z\in(k\pi,z_k]$},\\
\cot z,& \mbox{for $z\in(z_k,v_k]$},\\
b_{k+1}z,& \mbox{for $z\in(v_k,(k+1)\pi$]},
\end{array}
\right.
$$
where $z_k$ and $v_k$ are respective solutions of equations $\cot z=b_kz$ and $\cot b_{k+1}v=b_{k+1}v$ in $J_k$.
The function $F_0$ is of class $C^1$ everywhere in $(0,\infty)$ except at the points $z_k$ and $v_k$.
We can perform its smoothing in sufficiently small neighborhoods of these points, in order to get a function $F\in C^1(0,\infty)$.
It is clear that $F(z)\sim az$ as $z\to\infty$ and $\inf F'=-\infty$. But $F(z)=\cot z$ possesses infinitely many solutions on each interval~$I_k$.
\end{remark}

\begin{remark}
Assume that
$$
F(z)=\frac{f(z)}{f'(z)},
$$
where $f\in C^2(0,\infty)$. (a) The condition $\inf F'>-\infty$
is equivalent to $f(z)f''(z)\le C f'{^2}(z)$, where $C$ is a positive constant. (b) The condition
$F(z)<bz$ for $z$ sufficiently large, where $b$ is a negative constant (see Remark \ref{remark-dokaz-sp-ch}), is satisfied
if for all $z$ large enough we have $f(z)\ge az^{-\alpha}$ and $f'(z)\ge a_1z^{-\alpha-1}$, where  $a>0$ and $a_1<0$ are constants. It suffices to take $b\in(a/a_1,0)$.
\end{remark}

A variation of Lemma \ref{unique} is the following lemma.
\begin{lemma}\label{unique1}
Let $F\in C^1(0,\infty)$ be such that $F(z)\sim az$ as $z\to\infty$ for some $a>0$. Assume that $\sup F'<\infty$.
Then there exists a nonnegative integer $k_0$ such that for each $k\ge k_0$ the equation $\tan z=F(z)$ possesses the unique solution in
$J_k=((k-1/2)\pi,(k+1/2)\pi)$.
\end{lemma}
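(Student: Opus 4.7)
\textbf{Proof proposal for Lemma \ref{unique1}.} The plan is to mirror almost verbatim the strategy used for Lemma~\ref{unique}, but with the roles of $+\infty$ and $-\infty$ reversed and with $\tan$ replacing $\cot$. The single uniqueness engine is Lemma~\ref{unique0}, so all the work is in (a) localizing the solutions to a subinterval $I_k\subset J_k$ on which the monotonicity gap $\inf(\tan)'>\sup F'$ holds, and (b) producing at least one solution in $J_k$ by the intermediate value theorem.

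First I would set $M:=\sup F'<\infty$ and pick $s_0\in(0,\pi/2)$ close enough to $\pi/2$ so that
\[
(\tan)'(s_0)=\frac{1}{\cos^2 s_0}>M.
\]
Such $s_0$ exists because $(\tan)'(s)\to+\infty$ as $s\to(\pi/2)^-$. This $s_0$ plays the role that $s_0\in(\pi/2,\pi)$ played in the proof of Lemma~\ref{unique}. Next, since $F(z)\sim az$ with $a>0$, I fix any $b\in(0,a)$ and choose $M_1>0$ so that $F(z)>bz$ for all $z\ge M_1$. Then I take a nonnegative integer $k_0$ large enough so that simultaneously $k_0\pi\ge M_1$ and $b(k_0-1/2)\pi>\tan s_0$; the second inequality is possible because $b>0$. (As in Remark after Lemma~\ref{unique}, one could even weaken $F(z)\sim az$ to $F(z)>bz$ eventually, for some $b>0$.)

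Fix $k\ge k_0$ and set $I_k:=(k\pi+s_0,(k+1/2)\pi)\subset J_k$. For $z\ge k_0\pi$ one has
\[
F(z)>bz\ge b(k_0-1/2)\pi>\tan s_0,
\]
while on $J_k\setminus I_k=((k-1/2)\pi,k\pi+s_0]$ the increasing function $\tan z$ satisfies $\tan z\le\tan s_0$. Hence every solution of $\tan z=F(z)$ in $J_k$ must lie in $I_k$. On $I_k$ we have the uniform lower bound $(\tan)'(z)\ge (\tan)'(s_0)>M\ge\sup F'$, so Lemma~\ref{unique0} (applied with $G=\tan$, with the roles of $F$ and $G$ interchanged, or directly to $F-\tan$) yields at most one solution of $\tan z=F(z)$ on $I_k$, and therefore at most one on $J_k$.

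For existence, observe that on $J_k$ the function $\tan z$ is continuous and maps $J_k$ onto $\eR$, strictly increasingly from $-\infty$ to $+\infty$, while $F$ is continuous and bounded on the closure of any compact subinterval. Therefore $\tan z - F(z)$ takes arbitrarily negative values near the left endpoint and arbitrarily positive values near the right endpoint of $J_k$, and the intermediate value theorem gives at least one zero in $J_k$. Combined with the uniqueness shown above, this yields exactly one solution in $J_k$ for every $k\ge k_0$. The only step that requires any care is the choice of $s_0$ and $k_0$: one must make them in the correct order ($s_0$ first, depending only on $M$; then $k_0$ depending on $s_0$, $b$ and $M_1$), but there is no conceptual obstacle, and the argument is purely a mirror image of the one given for Lemma~\ref{unique}.
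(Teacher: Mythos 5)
Your proof is correct and is essentially the proof the paper intends: the paper gives no separate argument for Lemma \ref{unique1}, presenting it only as ``a variation of Lemma \ref{unique}'', and your mirrored argument (choosing $s_0\in(0,\pi/2)$ with $(\tan)'(s_0)>\sup F'$, localizing solutions to $I_k=(k\pi+s_0,(k+1/2)\pi)$, and invoking Lemma \ref{unique0} for uniqueness plus the intermediate value theorem for existence) is exactly that variation. The only cosmetic adjustment is that, since $J_{k_0}$ extends down to $(k_0-1/2)\pi$, you should require $(k_0-1/2)\pi\ge M_1$ rather than $k_0\pi\ge M_1$, so that $F(z)>bz$ holds on all of $J_k$ for every $k\ge k_0$.
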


\begin{remark}\label{remark-za-drpodt}
The condition $F(z)\sim az$ as $z\to \infty$ for $a>0$ in Lemma \ref{unique1} can be weakened by assuming that $F(z)>az$ for some $a>0$ and for all $z$ large enough. If $F(z)$
has the form $F(z)=\frac{f(z)}{f'(z)}$, where $f\in C^2(0,\infty)$, the condition $\sup F'<\infty$ is equivalent to $f(z)f''(z)\geq Cf'^2(z)$, where $C$ is positive constant.
Also, in that case, the condition $F(z)>az$ for $z$ large enough is satisfied if for all $z$ large enough we have $f(z)\geq a_1z^{-\alpha}$ and $f'(z)\leq a_2z^{-\alpha-1}$,
where $a_1$ and $a_2$ are positive constants. It suffices to take $a\in (0,\frac{a_1}{a_2})$.
\end{remark}

\begin{proof}[Theorem \ref{eqcs1}]
We can write the function $X(\tau)$ in the form $X(\tau)=p(\tau)\cos q(\tau)$, where $p(\tau)=f(\f(1/\tau))\simeq \tau^\alpha$, $p'(\tau)\simeq\tau^{\alpha-1}$,
$q(\tau)=\f(1/\tau)\simeq \tau^{-1}$, $q'(\tau)\simeq -\tau^{-2}$ as $\tau \to 0$. It follows that $X$ is an $(\alpha,1)$-chirp-like function.
Using the assumptions of the theorem, for the function
$$F(t):=\frac{pq'}{p'}(q^{-1}(t))=\frac{f(t)}{f'(t)}$$
we have $F(t)\simeq -t$ as $t\to\infty$, and $\frac{f(t)f''(t)}{f'^2(t)}<C$, for $t$ large enough, $C>0$. Then there exists $k_0\in \eN$ such that the equation $\cot q(t) =F(q(t))=\frac{p(\tau)q'(\tau)}{p'(\tau)}$
has the unique solution  $s_k\in(a_{k+1},a_k)$ where $a_{k+1}=q^{-1}((2k+1)\frac{\pi}{2}$ and $a_k=q^{-1}((2k-1)\frac{\pi}{2})$ for all $k\ge k_0$, see Lemma \ref{unique} and Remark \ref{remark-dokaz-sp-ch}. These solutions are just points of local extrema of $X(\tau)$ on $(a_{k+1},a_k)$, $k\ge k_0$. The sequence $(a_k)_{k\geq 1}$ of zero points of $X$ on $(0,1/t_0]$ is decreasing. Hence the sequence $(s_k)$ of
consecutive points of local extrema of $X$ is also decreasing. We have that $a_k=q^{-1}((2k-1)\frac{\pi}{2})\simeq k^{-1}$ as $k\to\infty$. So the same is true also for $s_k$, i.e., $s_k\simeq k^{-1}$ as $k\to\infty$, and
we also have $|X(s_k)|\le p(s_k)\le Cs_k^\alpha\le C_1k^{-\alpha}$. This implies that
\begin{equation}\label{red}
\sum_{k=k_0}^\infty|X(s_k)|\le C_1\sum_{k=k_0}^\infty k^{-\alpha}<\infty
\end{equation}
for $\alpha>1$. The length of the graph $G(X)$ is defined by
$$ \mathrm{length}(G(X)):= \sup \sum_{i=1}^m \|(t_i,X(t_i))-(t_{i-1},X(t_{i-1}))\|_2,$$
where the supremum is taken over all partitions $0=t_0<t_1<\ldots <t_m=1/t_0$ of the interval $[0,1/t_0]$ and where $\| . \|_2$ denotes the Euclidean norm in $\eR^2$.
Using \cite[Lemma 3.1.]{mervanraguz}, it follows that
$\mathrm{length}(G(X))\le 2 \sum_k|X(s_k)| +1/t_0 $.
Then $X$ is rectifiable due to (\ref{red}).
\qed
\end{proof}

\begin{remark}
Theorem \ref{c-s} can be applied to planar systems with pure imaginary pair of eigenvalues, because the normal forms of such systems satisfy the conditions of Theorem \ref{c-s},
see \cite[Theorem 9]{zuzu}. Also, Theorem \ref{c-s} can be applied to related second order differential equations.
\end{remark}

\appendix
\section{Proofs of some technical results}

\begin{proof}[Lemma \ref{tehnicka2}]
From the assumption that $p(t)\sim_1 t^{-\alpha}$ as $t\to\infty$ we have that for each $\varepsilon >0$
there exist $\overline{t}_0\geq t_0$ such that for all $t\geq \overline{t}_0$,
$$
(1-\varepsilon)t^{-\alpha}<p(t)<(1+\varepsilon)t^{-\alpha},\quad (1-\varepsilon)\alpha t^{-\alpha-1}<-p'(t)<(1+\varepsilon)\alpha t^{-\alpha-1},
$$
$$
\frac{p'(t)}{p(t)} >-\frac{K_1\alpha}{t},
$$
where $K_1:=\frac{1+\varepsilon}{1-\varepsilon}$. Then we have
$$
r_{\min}(t)\leq r(t) \leq r_{\max}(t),\quad \mbox{for all}\quad t\in[\overline{t}_0,\infty),
$$
where
\begin{eqnarray}
r_{\min}(t) & := & p(t)\sqrt{1-K_1\alpha t^{-1}}, \nonumber\\
r_{\max}(t) & := & p(t)\sqrt{1+K_1\alpha t^{-1}+K_1^2\alpha^2 t^{-2}} , \nonumber
\end{eqnarray}
and without loss of generality we assume that $\overline{t}_0>K_1\alpha$. Therefore
\begin{eqnarray}
& & r(t(\f))-r(t(\f+\triangle \f)) \geq r_{\min}(t_1)-r_{\max}(t_2)= \nonumber \\
& & \frac{p^2(t_1)(1-K_1\alpha t_1^{-1})-p^2(t_2)(1+K_1\alpha t_2^{-1}+K_1^2\alpha^2t_2^{-2})}{p(t_1)\sqrt{1-K_1\alpha t_1^{-1}}+p(t_2)\sqrt{1+K_1\alpha t_2^{-1}+K_1^2\alpha^2 t_2^{-2}}},\nonumber
\end{eqnarray}
where
$$
t_1=t(\f)=\f(1+O(\f^{-1})), \quad t_2=t(\f+\triangle \f))=\f(1+O(\f^{-1})) \quad\textrm{as}\quad t\to\infty .
$$
Now, using relations
\begin{eqnarray}
p(t_1)\sqrt{1-K_1\alpha t_1^{-1}} & \simeq & \f^{-\alpha},\quad p(t_2)\sqrt{1+K_1\alpha t_2^{-1}+K_1^2\alpha^2 t_2^{-2}}\simeq \f^{-\alpha}, \nonumber\\
t_1^{-\alpha} & \simeq & \f^{-\alpha},\quad t_2^{-\alpha}\simeq \f^{-\alpha} \quad \mathrm{as}\ \f \to\infty ,\nonumber
\end{eqnarray}
\begin{eqnarray}
p(t_1)\sqrt{1-K_1\alpha t_1^{-1}} & + & p(t_2)\sqrt{1+K_1\alpha t_2^{-1}+K_1^2\alpha^2 t_2^{-2}}\leq 2C_1\f^{-\alpha},\nonumber\\
p^2(t_1) & - & p^2(t_2)\geq 2(1-\varepsilon)^2\triangle \f \alpha\f^{-2\alpha-1}(1+O(\f^{-1})),\nonumber\\
-K_1 \alpha(p^2(t_1)t_1^{-1} & + & p^2(t_2)t_2^{-1})\geq -2K_1\alpha (1+\varepsilon)^2\f^{-2\alpha-1}(1+O(\f^{-1})), \nonumber\\
& - & K_1^2\alpha^2p^2(t_2)t_2^{-2}=O(\f^{-2\alpha-2}), \nonumber
\end{eqnarray}
where $C_1>0$ is independent of $\f$, we have
$$
r(t(\f))- r(t(\f+\triangle \f)) \geq \frac{\alpha(1-\varepsilon)^2}{C_1}\left[\triangle \f-\left(\frac{1+\varepsilon}{1-\varepsilon}\right)^3\right]\f^{-\alpha-1}(1+O(\f^{-1})) .
$$
If $\triangle \f>1$, then choosing $\varepsilon<\frac{(\triangle \f)^{1/3}-1}{(\triangle \f)^{1/3}+1}$, we obtain the claim.
\qed
\end{proof}

\begin{proof}[Proposition \ref{druga}]
From (\ref{apsp}) and (\ref{qq}) inequalities (\ref{qq1}) and (\ref{qq2}) follow directly by definition. The function $q|_{(0,\delta_0]}$ is
positive and decreasing, and its inverse function is defined on $[m_0,\infty)$. Relation  (\ref{qq-1}) follows from (\ref{qq2}) applying the well known formula for derivative of the inverse function. Then exploiting the mean value theorem and (\ref{qq-1}), we get (\ref{qq-1st}).
\qed
\end{proof}

\begin{proof}[Proposition \ref{treca}]
The claim in (i) is evident. To prove (ii), it suffices to take $k_0\in \eN$ such that $k_0T\geq m_0$. We shall prove inequality (\ref{pro4}) only, because
other facts are easy consequences of Proposition \ref{druga}. From (\ref{apsp}) we obtain that
$p(x)$ is positive and increasing function near $x=0$, and we have
$$
\max_{x\in[a_{k+1},a_k]} |y(x)| \geq p(s_k)|S(q(s_k))| \geq c p(a_{k+1}) \geq c_1 (a_{k+1})^\alpha \geq c_0 (k+1)^{-\frac{\alpha}{\beta}} ,
$$
for all $k\geq k_0$, where $c=\min \{ |S(t_0)|,|S(t_0+T)|\}$, $c_1=cC_1$ and $c_0=cC_1^2$ are positive constants. Now we prove (iii). Let $\varepsilon>0$ and let $k(\varepsilon)\in \eN$
be such that
$$
k(\varepsilon)\geq \frac{1}{T}\left(\frac{\varepsilon}{TC_2} \right)^{-\frac{\beta}{\beta+1}}=c\varepsilon^{-\frac{\beta}{\beta+1}},\quad c=T^{-1}(TC_2)^{\frac{\beta}{\beta+1}} .
$$
Let $\varepsilon_0'$ be such that for all $0<\varepsilon \leq \varepsilon_0' $ it holds $k(\varepsilon)T\geq m_0=q(\delta_0)$.
Further, for all $ \varepsilon<c^{\frac{\beta+1}{\beta}}$ we have  $2c\varepsilon^{-\frac{\beta}{\beta+1}}-c\varepsilon^{-\frac{\beta}{\beta+1}}>1$. So, there exists $k(\varepsilon)\in \eN$ such that
$$1<c\varepsilon^{-\frac{\beta}{\beta+1}}\leq k(\varepsilon)\leq 2c\varepsilon^{-\frac{\beta}{\beta+1}},\quad \mbox{for all}\quad \varepsilon<c^{\frac{\beta+1}{\beta}}.$$
Let us take $\varepsilon_0=\min \{\varepsilon_0',c^{\frac{\beta+1}{\beta}}\}$. Then we can find $k(\varepsilon)\in \eN$ such that
$$c\varepsilon^{-\frac{\beta}{\beta+1}}\leq k(\varepsilon)\leq 2c\varepsilon^{-\frac{\beta}{\beta+1}},\quad k(\varepsilon)T\geq m_0 \quad \mbox{for all}\quad
\varepsilon\in(0,\varepsilon_0).$$
Using (\ref{qq-1st}), then for all $k\geq k(\varepsilon)$ and $\varepsilon\in(0,\varepsilon_0)$ it holds
$$
C_1T((k+1)T)^{-\frac{1}{\beta}-1}\leq a_k-a_{k+1}\leq \varepsilon .
$$
\qed
\end{proof}

\bibliographystyle{plain}
\bibliography{bibliografija}

\end{document}